\title{Transfer operators for ultradifferentiable expanding maps of the circle.}
\author{Malo Jézéquel\footnote{Laboratoire de Probabilités, Statistique et Modélisation (LPSM),
CNRS, Sorbonne Université, Université de Paris,
4, Place Jussieu, 75005 Paris, France. The author is supported by the European Research Council (ERC) under the
European Union's Horizon 2020 research and innovation programme (grant agreement No 787304).}}
\providecommand{\keywords}[1]{\textbf{\textit{Keywords---}} #1}
\newcommand{\p}[1]{\left(#1\right)}
\newcommand{\va}[1]{\left| #1 \right|}
\newcommand{\n}[1]{\left\| #1 \right\|}
\newcommand{\R}{\mathbb{R}}
\newcommand{\C}{\mathbb{C}}
\newcommand{\Z}{\mathbb{Z}}
\newcommand{\N}{\mathbb{N}}
\newtheorem{thm}{Theorem}
\newtheorem{lm}[thm]{Lemma}
\newtheorem{cor}[thm]{Corollary}
\newtheorem{prop}[thm]{Proposition}
\theoremstyle{definition}
\newtheorem{rmq}[thm]{Remark}
\begin{document}

\maketitle

\begin{abstract}
Given a $\mathcal{C}^\infty$ expanding map $T$ of the circle, we construct a Hilbert space $\mathcal{H}$ of smooth functions on which the transfer operator $\mathcal{L}$ associated to $T$ acts as a compact operator. This result is made quantitative (in terms of singular values of the operator $\mathcal{L}$ acting on $\mathcal{H}$) using the language of Denjoy--Carleman classes. Moreover, the \emph{nuclear power decomposition} of Baladi and Tsujii can be performed on the space $\mathcal{H}$, providing a bound on the growth of the dynamical determinant associated to $\mathcal{L}$.
\end{abstract}

\keywords{Transfer operator, dynamical determinant, Ruelle resonances, Denjoy--Carleman classes}

\medskip

In the two previous articles \cite{lagtf,jezequelGlobalTraceFormula2019}, we introduced tools to study transfer operators and dynamical determinants for hyperbolic dynamics satisfying certain conditions of ultradifferentiability (i.e. hypotheses of regularity that are intermediate between $\mathcal{C}^\infty$ and real-analytic). In the present paper, we generalize our approach, producing a framework that allows to deal with any \emph{Denjoy--Carleman class} (we present Denjoy--Carleman classes in \S \ref{secdc}, see \cite{nqa} and references therein for a more complete survey). To make the exposition clearer, we restrict to the simplest case: expanding maps of the circle. More interesting cases, namely hyperbolic diffeomorphisms and Anosov flows, are dealt with respectively in \cite{lagtf} and in \cite{jezequelGlobalTraceFormula2019}. Since every $\mathcal{C}^\infty$ function belongs to some Denjoy--Carleman class, we shall prove in particular the following theorem (as a consequence of Theorem \ref{thm1quant} and Lemma \ref{lmelem}).

\begin{thm}\label{thmintro}
Let $T$ be a $\mathcal{C}^\infty$ expanding map of the circle. Then there exists a Hilbert space $\mathcal{H}$, continuously contained in $\mathcal{C}^\infty\p{\mathbb{S}^1}$ and that contains trigonometric polynomials as a dense subspace, such that the transfer operator 
\begin{equation}\label{eqdefL}
\begin{split}
\mathcal{L} : \varphi \mapsto \p{ x \mapsto \sum_{y : Ty = x} \frac{1}{\va{T'(y)}} \varphi(y)}
\end{split}
\end{equation}
defines a compact operator from $\mathcal{H}$ to itself.
\end{thm}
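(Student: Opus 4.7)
My plan is to deduce Theorem~\ref{thmintro} directly from the two results flagged in the introduction: the quantitative version Theorem~\ref{thm1quant}, which will establish the conclusion for any map lying in a fixed Denjoy--Carleman class, and the elementary Lemma~\ref{lmelem}, which should guarantee that every $\mathcal{C}^\infty$ map of the circle, in particular our $T$, belongs to some such class.

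Concretely, I would first invoke Lemma~\ref{lmelem} on $T$. I anticipate this lemma to say that for any $\mathcal{C}^\infty$ function on $\mathbb{S}^1$ one can manufacture a sequence $(M_n)_{n \in \N}$ of positive reals satisfying the structural axioms of a Denjoy--Carleman defining sequence (log-convexity, moderate growth, stability under composition and inversion of inverse branches, etc.), together with constants $C, R > 0$, such that $\n{T^{(n)}}_\infty \le C R^n M_n$ for all $n$. Heuristically this is possible because, given any sequence of positive reals, one can always build a log-convex majorant that grows fast enough to encompass it. The only care needed is to preserve whatever additional axioms the machinery behind Theorem~\ref{thm1quant} requires of the sequence $M$.

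Second, I would feed the class produced by Lemma~\ref{lmelem} into Theorem~\ref{thm1quant}. The latter should then produce a Hilbert space $\mathcal{H}$, defined via decay conditions on Fourier coefficients adapted to $M$, on which the transfer operator $\mathcal{L}$ is bounded with singular values controlled quantitatively. The continuous embedding $\mathcal{H}\hookrightarrow\mathcal{C}^\infty(\mathbb{S}^1)$ and the density of trigonometric polynomials in $\mathcal{H}$ should be immediate from such a Fourier description: the partial Fourier sums of any element of $\mathcal{H}$ converge in norm by construction, while the Fourier weights force $\mathcal{H}$-elements to have rapidly decaying Fourier coefficients and hence to be smooth. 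Compactness of $\mathcal{L}$ is then a direct consequence of the singular value bounds of Theorem~\ref{thm1quant} tending to zero.

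The genuine substance of the paper obviously lies in Theorem~\ref{thm1quant}; the derivation above is bookkeeping. The only real point to verify is the compatibility between the hypotheses Theorem~\ref{thm1quant} places on the defining sequence $(M_n)$ and what Lemma~\ref{lmelem} delivers starting from mere $\mathcal{C}^\infty$ data. Checking that these two match up, and in particular that Lemma~\ref{lmelem} can be tuned to produce an $M$ satisfying every axiom invoked downstream, is the one step I would want to perform carefully before considering the proof of Theorem~\ref{thmintro} complete.
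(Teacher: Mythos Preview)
Your proposal is correct and follows precisely the route the paper takes: Theorem~\ref{thmintro} is stated as a consequence of Theorem~\ref{thm1quant} and Lemma~\ref{lmelem}, and the derivation is exactly the bookkeeping you describe. Two minor clarifications: the paper applies the Denjoy--Carleman hypothesis to $T'$ rather than to $T$ itself, and the only structural axioms Theorem~\ref{thm1quant} requires of $M$ are that it be increasing, logarithmically convex, and satisfy $M_0=1$ --- no moderate growth or stability under composition is needed --- which is exactly what Lemma~\ref{lmelem} produces, so the compatibility check you flag is immediate.
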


As far as we know, prior to our work on ultradifferentiable dynamics, constructions of spaces on which transfer operators are compact operators were only known for real-analytic hyperbolic dynamics (these constructions were pioneered by Ruelle \cite{Ruelle1}, see \cite{slipantschukCompleteSpectralData2017,slipantschukAnalyticExpandingCircle2013,bandtlowExplicitEigenvalueEstimates2008,naudRuelleSpectrumGeneric2012,bandtlowSpectralStructureTransfer2017,bandtlowLowerBoundsRuelle2019} for modern results on transfer operators for real-analytic hyperbolic dynamics).

Recall that a differentiable map $T$ from $\mathbb{S}^1 = \R / \Z$ to itself is said to be expanding if there is $\lambda > 1$ such that for all $x \in \mathbb{S}^1$ we have $\va{T'(x)} \geq \lambda$ (where the circle is parallelized in the usual way). Our method allows to tackle more general transfer operators than \eqref{eqdefL}, but we will focus on $\mathcal{L}$ in order to keep the exposition as simple as possible (we explain in the appendix how to deal with more general weights).

It can be shown that the spectrum of $\mathcal{L}$ acting on $\mathcal{H}$ from Theorem \ref{thmintro} is intrinsically defined by $T$ (this spectrum is called \emph{Ruelle spectrum} of $\mathcal{L}$). For instance, the non-zero eigenvalues of $\mathcal{L}$ are the inverses of the zeroes of the entire continuation of 
\begin{equation}\label{eqdefdet}
\begin{split}
d(z) \coloneqq \exp\p{- \sum_{n=1}^{+ \infty} \frac{1}{n} \textup{tr}^\flat \p{\mathcal{L}^n} z^n},
\end{split}
\end{equation}
where we set for $n \in \N^*$
\begin{equation}\label{eqdeftrace}
\begin{split}
\textup{tr}^\flat\p{\mathcal{L}^n} \coloneqq \sum_{x: T^n x = x} \frac{1}{\va{1 - \p{T^n}'(x)}}.
\end{split}
\end{equation}
The entire continuation of $d$ (that we still denote by $d$) is the \emph{dynamical determinant} of $T$. For the general theory of Ruelle spectrum and dynamical determinant for expanding and hyperbolic maps, see \cite{Bal2} and references therein.

We shall give bounds on the singular values of $\mathcal{L}$ acting on $\mathcal{H}$ depending on the Denjoy--Carleman class to which $T$ belongs (see Theorem \ref{thm1quant}). For the smallest classes, the operator $\mathcal{L}$ turns out to be trace class and in this case the dynamical determinant can be written as
\begin{equation*}
\begin{split}
d(z) = \det\p{I - z \mathcal{L}}.
\end{split}
\end{equation*}
When $\mathcal{L}$ is not known to be trace class, we will see that we can implement the \emph{nuclear power decomposition} from \cite{Tsu} to study the dynamical determinant, as stated in the following theorem (see \cite{Gohb,Groth} for the general theory of trace class and nuclear operators, notice that, on a Hilbert space, a nuclear operator of order $0$ is just a compact operators whose singular values are $p$-summable for all $p > 0$). This decomposition allows to write the dynamical determinant as a particular case of \emph{Weinstein--Aronszajn determinant} (see \cite[IV.\S 6]{Kato} and references therein).

\begin{thm}\label{thmintro2}
Let $T$ and $\mathcal{H}$ be as in Theorem \ref{thmintro}. There are two compact operators $\mathcal{L}_c$ and $\mathcal{L}_b$ from $\mathcal{H}$ to itself such that $\mathcal{L}_c$ is nuclear of order $0$, the spectral radius of $\mathcal{L}_b$ is $0$, the operator $\mathcal{L}$ is the sum of $\mathcal{L}_b$ and $\mathcal{L}_c$ and, for all $z \in \C$, we have
\begin{equation*}
\begin{split}
d(z) = \det\p{I - z\p{I - z \mathcal{L}_b}^{-1} \mathcal{L}_c}.
\end{split}
\end{equation*}
\end{thm}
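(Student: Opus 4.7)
The plan is to construct the splitting $\mathcal{L} = \mathcal{L}_b + \mathcal{L}_c$ by a frequency truncation adapted to the scale structure of $\mathcal{H}$. The quantitative Theorem \ref{thm1quant} is expected to equip $\mathcal{H}$ with an orthonormal basis $\set{e_k}_{k \in \Z}$ carrying a natural frequency grading (weighted trigonometric monomials or analogues thereof). I would let $\pi_N$ denote the orthogonal projection onto the span of $\set{e_k : \va{k} \leq N}$, and set
\[
\mathcal{L}_c \coloneqq \pi_N \mathcal{L}, \qquad \mathcal{L}_b \coloneqq \p{I - \pi_N}\mathcal{L},
\]
where $N \in \N$ is to be chosen large enough. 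Both operators are then compact (since $\mathcal{L}$ is, by Theorem \ref{thmintro}), $\mathcal{L}_c$ has finite rank and is therefore trivially nuclear of order $0$, and $\mathcal{L}_b + \mathcal{L}_c = \mathcal{L}$ by construction.

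The substantial step is to show that $\mathcal{L}_b$ has spectral radius zero for $N$ sufficiently large. Heuristically, applying $\mathcal{L}$ to a mode of frequency $k$ produces something essentially supported at frequencies of size $\lesssim \va{k}/\lambda$, since the inverse branches of $T$ contract by $\lambda^{-1}$ and composition with a contraction reduces oscillation. Iterating $\p{I - \pi_N}\mathcal{L}$ then forces one to stay at frequencies $> N$ while dividing them by at least $\lambda$ at each step, which is eventually impossible, and should yield $\n{\mathcal{L}_b^n}_{\mathcal{H}} \to 0$ with a rate governed by the Denjoy--Carleman class. The main obstacle is that $\mathcal{L}$ does not act diagonally on $\set{e_k}$ unless $T$ is linear; the argument requires quantitative off-diagonal estimates on the matrix coefficients of $\mathcal{L}$, which should be available as a byproduct of the proof of Theorem \ref{thm1quant} where the regularity of $T$ is already exploited to control how $\mathcal{L}$ spreads scales.

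Granted the splitting, the determinant identity follows from algebraic manipulations. Since $\mathcal{L}_b$ is quasi-nilpotent, $I - z\mathcal{L}_b$ is invertible for every $z \in \C$, and one has the factorization
\[
I - z\mathcal{L} = \p{I - z\mathcal{L}_b}\p{I - z\p{I - z\mathcal{L}_b}^{-1}\mathcal{L}_c}.
\]
The operator $\p{I - z\mathcal{L}_b}^{-1}\mathcal{L}_c$ is nuclear of order $0$ for every $z$, so $z \mapsto \det\p{I - z\p{I - z\mathcal{L}_b}^{-1}\mathcal{L}_c}$ defines an entire function. To identify it with $d(z)$, I would expand
\[
-\log\det\p{I - z\p{I - z\mathcal{L}_b}^{-1}\mathcal{L}_c} = \sum_{n \geq 1} \frac{z^n}{n}\,\textup{tr}\p{\p{I - z\mathcal{L}_b}^{-1}\mathcal{L}_c}^n,
\]
substitute the Neumann series $\p{I - z\mathcal{L}_b}^{-1} = \sum_{k \geq 0} z^k \mathcal{L}_b^k$, and use the cyclicity of trace to regroup all contributions at a fixed power $z^N$. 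A flat-trace kernel computation, together with the vanishing of the pure-$\mathcal{L}_b$ term (thanks to quasi-nilpotence), should identify the coefficient of $z^N$ with $\frac{1}{N}\textup{tr}^\flat\p{\mathcal{L}^N}$, matching the series defining $d(z)$ in \eqref{eqdefdet}.
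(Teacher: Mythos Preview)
Your proposed decomposition $\mathcal{L}_c = \pi_N\mathcal{L}$, $\mathcal{L}_b = (I-\pi_N)\mathcal{L}$ is genuinely different from the paper's, and the central claim --- that $(I-\pi_N)\mathcal{L}$ has spectral radius zero for some finite $N$ --- is in fact false in general. Take $N=0$, so that $\pi_0$ is the projection onto constants. If $\mu\neq 0,1$ is any Ruelle resonance with eigenvector $v$, then integrating $\mathcal{L}v=\mu v$ gives $\int v = \mu\int v$, hence $\pi_0 v = 0$; thus $v$ is already an eigenvector of $(I-\pi_0)\mathcal{L}$ with eigenvalue $\mu$. Conversely, if $\mu$ is not in the spectrum of $\mathcal{L}$ and $(I-\pi_0)\mathcal{L}w=\mu w$ with $w\neq 0$, then $(\mathcal{L}-\mu)w = c\,e_0$ with $c\neq 0$, and $0=\langle e_0,w\rangle = c\langle e_0,(\mathcal{L}-\mu)^{-1}e_0\rangle$; but $\langle e_0,\mathcal{L}^n e_0\rangle = \int \mathcal{L}^n e_0 = 1$ for all $n$, so this resolvent coefficient equals $(1-\mu)^{-1}\neq 0$, a contradiction. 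Hence the nonzero spectrum of $(I-\pi_0)\mathcal{L}$ is exactly the set of Ruelle resonances other than $1$, and your $\mathcal{L}_b$ is quasi-nilpotent only when $d(z)=1-z$. Increasing $N$ removes at most finitely many eigenvalues at a time; for a generic expanding map with infinitely many resonances, no finite $N$ suffices. Your heuristic that $\mathcal{L}$ divides frequencies by $\lambda$ shows only that $\n{(I-\pi_N)\mathcal{L}}\to 0$ as $N\to\infty$, not that the spectral radius vanishes for fixed $N$.

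The paper's decomposition is triangular rather than a frequency cutoff. With dyadic projectors $\pi_m$ onto the annuli $\theta^m\leq|k|<\theta^{m+1}$, one sets $\mathcal{L}_c=\sum_{m\geq n}\pi_m\mathcal{L}\pi_n$ and $\mathcal{L}_b=\sum_{m<n}\pi_m\mathcal{L}\pi_n$. The off-diagonal estimate you allude to (Lemma~\ref{lmfonda}) controls precisely the blocks with $m\geq n$ (output frequency not much below input), making $\mathcal{L}_c$ bounded from $L^2$ into $\mathcal{H}$ and hence nuclear of order $0$; note $\mathcal{L}_c$ is \emph{not} finite rank. The operator $\mathcal{L}_b$ is strictly triangular in the dyadic grading, so its finite truncations $B_N$ are genuinely nilpotent, and the identity $B_N^n=\prod_{k=0}^{n-1}(B_N-B_k)$ together with the Cauchy estimates of Lemma~\ref{lmcauchy} yields $\n{\mathcal{L}_b^n}\leq C^n\prod_{k<n}g(k)\to 0$. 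The determinant identity is then obtained by approximating $\mathcal{L}_b$ by the nilpotent $B_N$ (so that $\det(I-zB_N)=1$ exactly), passing to the limit, and computing the resulting traces via a Dirichlet-kernel argument; your formal regrouping by cyclicity would run into the difficulty that $\mathcal{L}_b$ itself need not be trace class.
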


This theorem can be made quantitative if $T$ belongs to a specific Denjoy--Carleman class, in particular we establish a bound on the growth of the dynamical determinant $d$ (see Proposition \ref{propbound}).

Notice that Theorems \ref{thmintro} and \ref{thmintro2} are already known when $T$ is real-analytic. In this case, $\mathcal{H}$ can be chosen to be a Hardy space and $\mathcal{L}_b$ to be $0$. The real-analytic case has been dealt with first by Ruelle in his pioneering paper \cite{Ruelle1}, in which he introduced the notion of dynamical determinant, and has been extensively studied recently \cite{bandtlowExplicitEigenvalueEstimates2008,naudRuelleSpectrumGeneric2012,slipantschukAnalyticExpandingCircle2013,bandtlowSpectralStructureTransfer2017,bandtlowLowerBoundsRuelle2019}.

The method that we develop here could probably apply to more general settings such as expanding maps on more general manifolds and hyperbolic diffeomorphisms or flows. In particular, one could probably use ideas from the present paper to improve the results from \cite{lagtf} and get a conjecturally optimal bound on the growth of dynamical determinants of Gevrey hyperbolic diffeomorphisms. See \S \ref{subsecga} for more details.

This paper is structured as follows. First we introduce the very elementary facts that we need about Denjoy--Carleman classes in \S \ref{secdc}. Then, we construct the space $\mathcal{H}$ from Theorem \ref{thmintro} and prove a quantitative version of Theorem \ref{thmintro} (namely Theorem \ref{thm1quant}) in \S \ref{sech}. In \S \ref{secnpd}, we implement the nuclear power decomposition in our space $\mathcal{H}$ to prove Theorem \ref{thmintro2}. We discuss some examples in \S \ref{secex}. Finally, we explain how to deal with weighted transfer operators in the appendix.

\section{Denjoy--Carleman classes}\label{secdc}

The interested reader may consult \cite{nqa} and references therein for a more complete introduction to the topic of Denjoy--Carleman classes.

Let $M = \p{M_k}_{k \in \N}$ be an increasing and logarithmically convex sequence of positive real numbers such that $M_0 = 1$. Recall that the fact that $M$ is logarithmically convex means that
\begin{equation*}
\begin{split}
\forall k \in \N^* : M_k^2 \leq M_{k-1} M_{k+1}.
\end{split}
\end{equation*}

The sequence $M$ is now fixed until \S \ref{secex}. We say that a $\mathcal{C}^\infty$ function $f : \mathbb{S}^1 \to \C$ is in the Denjoy--Carleman class $\mathcal{C}^M$ if there are constants $ C,R > 0$ such that for all $k \in \N$ and $x \in \mathbb{S}^1$ we have
\begin{equation}\label{eqdefdc}
\begin{split}
\va{f^{(k)}(x)} \leq C R^k k! M_k.
\end{split}
\end{equation}
We will not define what it means for a map $T : \mathbb{S}^1 \to \mathbb{S}^1$ to belong to the class $\mathcal{C}^M$. We shall rather assume when needed that the derivative $T' : \mathbb{S}^1 \to \R$ belongs to the class $\mathcal{C}^M$. Since the class $\mathcal{C}^M$ does not need to be closed under differentiation, it does not necessarily imply that $T$ belongs to $\mathcal{C}^M$ (for any reasonable definition).
 
We will use Denjoy--Carleman classes in a very basic way and, consequently, we do not need any fact from the general theory of Denjoy--Carleman classes. However, the proof of Lemma \ref{lmfonda} below is very similar to the proof of the stability by composition of the class $\mathcal{C}^M$ (that relies on the fact that $M$ is logarithmically convex).

To the class $\mathcal{C}^M$, we associate the function $w = w^M$ on $\R_+^*$ defined by 
\begin{equation}\label{eqdefw}
\begin{split}
\forall x \in \R_+^* : w(x) \coloneqq \inf_{k \in \N} x^k k! M_k.
\end{split}
\end{equation}
The function $w$ will play a fundamental role in estimates on singular values and norms of operators appearing in the nuclear power decomposition of the transfer operator. We are not aware of any reference introducing precisely the function $w$, but it seems common to introduce similar objects adapted to a particular problem (see for instance \cite[(1.1)]{furdosAlmostAnalyticExtensions2019}). The following lemma lists basic properties of the function $w$.

\begin{lm}\label{lmpropw}
The function $w$ is continuous and increasing from $\R_+^*$ to itself. Moreover, $w$ vanishes at all orders in $0$, i.e. for all $\alpha \in \R$ we have $x^\alpha w(x) \underset{x \to 0}{\to} 0$. If $\mu \in \left]0,1\right[$ then $\frac{w(\mu x)}{w(x)} \underset{x \to 0}{\to} 0$. If in addition $\gamma > 1$ is such that there is $C > 0$ such that for all $k \in \N$ we have
\begin{equation}\label{eqgentil}
\begin{split}
(k+1) M_{k+1} \leq C \gamma^k M_k,
\end{split}
\end{equation}
then, if $\mu \in \left]0,1\right[$, there is a constant $C'$ such that for all $x > 0$ we have
\begin{equation}\label{eqpoly}
\begin{split}
\frac{w(\mu x)}{w(x)} \leq C' x^\delta,
\end{split}
\end{equation}
where $\delta = - \frac{\log \mu}{\log \gamma}$.
\end{lm}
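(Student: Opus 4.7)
The plan is to exploit the minimizer of the sequence $k \mapsto x^k k! M_k$ defining $w(x)$. First I would check that for each fixed $x > 0$ the infimum in \eqref{eqdefw} is attained: because $M$ is increasing with $M_0 = 1$, we have $M_k \geq 1$ and hence $x^k k! M_k \geq x^k k! \to +\infty$ as $k \to +\infty$. Let $k_x \in \N$ denote a minimizer. Monotonicity of $w$ and its upper semicontinuity are automatic from $w$ being the infimum of continuous increasing functions. For full continuity, I would observe that on a compact subinterval $[a,b] \subset \R_+^*$ the universal bound $w \leq 1$ (coming from $k = 0$) forces $a^{k_x} k_x! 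M_{k_x} \leq 1$; since $a^k k! \to +\infty$, only finitely many values of $k$ are possible minimizers on $[a,b]$, so $w|_{[a,b]}$ is the minimum of finitely many continuous functions. The vanishing at $0$ of all orders follows immediately from $w(x) \leq x^k k! M_k$ with $k > -\alpha$.

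The central observation for the ratio $w(\mu x)/w(x)$ is
\begin{equation*}
w(\mu x) \leq (\mu x)^{k_x} k_x! M_{k_x} = \mu^{k_x} w(x),
\end{equation*}
which reduces everything to controlling $k_x$ as $x \to 0$. Comparing the values of $k \mapsto x^k k! M_k$ at $k_x$ and $k_x + 1$, minimality gives
\begin{equation*}
(k_x + 1) \frac{M_{k_x + 1}}{M_{k_x}} \geq \frac{1}{x}.
\end{equation*}
By logarithmic convexity, the ratio $M_{k+1}/M_k$ is non-decreasing in $k$, and since $M$ is increasing it is at least $1$; consequently the sequence $a_k \coloneqq (k+1) M_{k+1}/M_k$ is non-decreasing with $a_k \geq k + 1 \to +\infty$. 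Therefore the smallest $k$ with $a_k \geq 1/x$ tends to $+\infty$ as $x \to 0$, forcing $k_x \to +\infty$ and $\mu^{k_x} \to 0$.

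For the polynomial refinement, assumption \eqref{eqgentil} says $a_k \leq C \gamma^k$, which combined with $a_{k_x} \geq 1/x$ yields $\gamma^{k_x} \geq 1/(Cx)$, i.e.\ $k_x \geq -\log(Cx)/\log \gamma$. Since $\log \mu < 0$, substitution produces
\begin{equation*}
\frac{w(\mu x)}{w(x)} \leq \mu^{k_x} \leq (Cx)^{-\log \mu / \log \gamma} = C^\delta x^\delta,
\end{equation*}
which is \eqref{eqpoly} with $C' = C^\delta$. The only place where a little care is needed is the continuity statement, since the infimum of continuous functions is in general only upper semicontinuous; once one localises on a compact set and uses that the candidate indices are finite in number, the remainder of the lemma is a mechanical exploitation of the key inequality $w(\mu x) \leq \mu^{k_x} w(x)$ together with the logarithmic convexity of $M$.
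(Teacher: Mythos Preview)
Your proof is correct and follows essentially the same strategy as the paper: both hinge on the minimizer $k_x$ of $k \mapsto x^k k! M_k$ and the key inequality $w(\mu x) \leq \mu^{k_x} w(x)$. The only noteworthy difference is in deriving the lower bound on $k_x$ under \eqref{eqgentil}: where the paper iterates the observation $k(x/\gamma) \geq k(x) + 1$, you obtain $k_x \geq -\log(Cx)/\log \gamma$ in one stroke from $1/x \leq a_{k_x} \leq C\gamma^{k_x}$, which is arguably cleaner.
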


\begin{proof}
Since $w$ is defined as an infimum of increasing functions, $w$ is increasing. Since $w(x)$ is smaller than $x^k k! M_k$ for all $k$, it is clear that $w$ vanishes at all orders in $0$.

If $x \in \R_+^*$, since $ x^k k! M_k \underset{k \to + \infty}{\to} + \infty$, the infimum in the definition of $w(x)$ is attained by a finite number of integers $k$. Denote by $k(x)$ the largest integer that realizes this infimum. Notice that if $\ell \leq m$ then the logarithmic derivative of $x \mapsto x^\ell \ell! M_\ell$ is smaller than that of $x \mapsto x^m m! M_m$. Consequently, the function $x \mapsto k(x)$ is decreasing. 
Thus if $x_0 > 0$ then for all $x > x_0$ since $k(x) \leq k(x_0)$ we have
\begin{equation*}
\begin{split}
w(x) = x^{k(x)} k(x)! M_{k(x)} = \min_{n=0,\dots,k(x_0)} x^n n! M_n,
\end{split}
\end{equation*}
and consequently $w$ is continuous on $\left]x_0,+ \infty\right[$. Since $x_0 > 0$ is arbitrary, $w$ is continuous on $\R_+^*$.

Let $\mu$ be an element of $\left]0,1\right[$. Notice that for all $x > 0$ we have
\begin{equation}\label{eqdecayw}
\begin{split}
\frac{w(\mu x)}{w(x)} = \frac{w(\mu x)}{x^{k(x)} k(x)! M_{k(x)}} \leq \frac{\p{\mu x}^{k(x)} k(x)! M_{k(x)}}{x^{k(x)} k(x)! M_{k(x)}} = \mu^{k(x)},
\end{split}
\end{equation}
and since it is clear that $k(x) \underset{x \to 0}{\to} + \infty$, we get that $\frac{w(\mu x)}{w(x)} \underset{x \to 0}{\to} 0$. Assume now that \eqref{eqgentil} holds. Notice that if $0 < x < \frac{1}{C}$ then
\begin{equation*}
\begin{split}
\p{\frac{x}{\gamma}}^{k(x) +1} \p{k(x) + 1}! M_{k(x) +1} \leq x^{k(x)} k(x)! M_{k(x)},   
\end{split}
\end{equation*}
and thus we have
\begin{equation*}
\begin{split}
k\p{\frac{x}{\gamma}} \geq k(x) + 1.
\end{split}
\end{equation*}
Now, if $0 < x < \frac{1}{C}$, letting $n$ be the largest integer such that $\gamma^{n} x < \frac{1}{C}$, we find that
\begin{equation*}
\begin{split}
k(x) = k\p{\frac{\gamma^n x}{\gamma^n}} \geq k\p{\gamma^n x} + n \geq n \geq - \frac{\log x}{\log \gamma} - a,
\end{split}
\end{equation*}
where $a = \frac{\log \p{\gamma C}}{\log \gamma}$. Thus by \eqref{eqdecayw} we find that if $0 < x < \frac{1}{C}$ then
\begin{equation*}
\begin{split}
\frac{w(\mu x)}{w(x)} \leq C' x^{\delta},
\end{split}
\end{equation*}
where $C ' = \mu^{- a}$.
\end{proof}

We end this section with a lemma that implies in particular that every $\mathcal{C}^\infty$ function on the circle belongs to some Denjoy--Carleman class. It allows us to deduce Theorems \ref{thmintro} and \ref{thmintro2} from their quantitive versions Theorem~\ref{thm1quant} and  Propositions \ref{propfastdecay}, \ref{propnucpart} and \ref{propfact}. We omit the elementary proof.

\begin{lm}\label{lmelem}
Let $\p{A_k}_{k \in \N}$ be a sequence of non-negative real numbers. Then there are a constant $C > 0$ and an increasing and logarithmically convex sequence $\p{B_k}_{k \in \N}$ of positive real numbers such that $B_0 = 1$ and
\begin{equation*}
\begin{split}
\forall k \in \N : A_k \leq C B_k.
\end{split}
\end{equation*}
\end{lm}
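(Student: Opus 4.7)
The plan is to reduce the problem to choosing non-decreasing ratios, since an increasing log-convex sequence with $B_0=1$ is precisely one of the form $B_k = r_1 r_2 \cdots r_k$ with $1 \leq r_1 \leq r_2 \leq \cdots$. Indeed, the log-convexity condition $B_k^2 \leq B_{k-1} B_{k+1}$ is equivalent to $B_{k+1}/B_k \geq B_k/B_{k-1}$, and the condition $B_k \geq B_{k-1}$ together with $B_0 = 1$ forces $r_1 \geq 1$. Hence the task becomes: produce non-decreasing ratios $r_k \geq 1$ whose partial products dominate $A_k$ up to a multiplicative constant.

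To achieve this, I would first replace $(A_k)$ by its non-decreasing upper envelope $\tilde{A}_k \coloneqq \max\p{1, A_0, A_1, \dots, A_k}$, which satisfies $\tilde{A}_k \geq A_k$, $\tilde{A}_0 \geq 1$ and $\tilde{A}_k \geq \tilde{A}_{k-1}$. Then I would set $S_k \coloneqq \tilde{A}_k / \tilde{A}_{k-1} \geq 1$ for $k \geq 1$, and regularize these ratios by
\begin{equation*}
r_k \coloneqq \max_{1 \leq j \leq k} S_j,
\end{equation*}
which is non-decreasing in $k$ and satisfies $r_k \geq S_k \geq 1$. Finally, define $B_0 \coloneqq 1$ and $B_k \coloneqq \prod_{i=1}^k r_i$ for $k \geq 1$.

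It then only remains to verify the three properties. The sequence $(B_k)$ is increasing because $r_i \geq 1$, and it is logarithmically convex because $B_{k+1}/B_k = r_{k+1} \geq r_k = B_k/B_{k-1}$. For the domination, one has
\begin{equation*}
B_k = \prod_{i=1}^k r_i \geq \prod_{i=1}^k S_i = \frac{\tilde{A}_k}{\tilde{A}_0} \geq \frac{A_k}{\tilde{A}_0},
\end{equation*}
so the conclusion holds with $C \coloneqq \tilde{A}_0 = \max(1, A_0)$. There is no real obstacle: the whole construction is the elementary log-convex regularization trick, and the only mild subtlety is the initial replacement of $A_k$ by $\tilde{A}_k$, which is needed to handle the possibility that some $A_k$ vanish or that $(A_k)$ itself fails to be monotone.
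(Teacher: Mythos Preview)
Your proof is correct; the construction via non-decreasing ratios is exactly the standard log-convex regularization, and each step checks out. The paper itself omits the proof of this lemma entirely (``We omit the elementary proof''), so there is nothing to compare against.
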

%

\section{Construction of the space $\mathcal{H}$}\label{sech}

Let $T$ be an expanding map of the circle, that is there is $\lambda>1$ such that for all $x \in \mathbb{S}^1$ we have $\va{T'(x)} > \lambda$. We assume in addition that $T'$ belongs to the class $\mathcal{C}^M$. We recall that the transfer operator $\mathcal{L}$ associated to $T$ is defined by \eqref{eqdefL}. We shall explain in the appendix how to deal with more general transfer operators. The function $w=w^M$ (defined by \eqref{eqdefw}) allows us to state a quantitative version of Theorem \ref{thmintro} (see \cite[\S IV.2]{Gohb} for definition of singular values).

\begin{thm}\label{thm1quant}
For every $\theta \in \left]1,\lambda\right[$ there are constants $C,A > 0$ and a Hilbert space $\mathcal{H}$ continuously contained in $\mathcal{C}^\infty\p{\mathbb{S}^1}$ and containing trigonometric polynomials as a dense subspace, such that $\mathcal{L}$ defines a compact operator from $\mathcal{H}$ to itself. Moreover, if $\p{\sigma_k}_{k \in \N}$ is the sequence of singular values of $\mathcal{L}$ acting on $\mathcal{H}$ then we have
\begin{equation}\label{eqboundsing}
\begin{split}
\forall k \in \N^* : \sigma_k \leq C \sup_{0 < x \leq \frac{1}{k}} \frac{w\p{ A x}}{w\p{\theta Ax}}.
\end{split}
\end{equation}
\end{thm}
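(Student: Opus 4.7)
The plan is to build $\mathcal{H}$ as a Fourier-weighted $\ell^2$ space: for a parameter $R > 0$ to be chosen in terms of $\theta$, $\lambda$, and the $\mathcal{C}^M$ constants of $T'$, set
$$\|\varphi\|_{\mathcal{H}}^2 := |\hat{\varphi}(0)|^2 + \sum_{n\neq 0} \frac{|\hat{\varphi}(n)|^2}{w(R/|n|)^2}.$$
Because $w$ vanishes at all orders at $0$ by Lemma~\ref{lmpropw}, finiteness of this norm forces $\hat{\varphi}(n)$ to decay faster than any polynomial, so $\mathcal{H} \hookrightarrow \mathcal{C}^\infty(\mathbb{S}^1)$ continuously, and trigonometric polynomials are dense by construction.

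The central estimate will be on the matrix coefficients
$$\hat{\mathcal{L}}(n,m) := \langle \mathcal{L}e_n, e_m\rangle_{L^2} = \int_0^1 e^{2\pi i(ny - mT(y))}\,dy,$$
where the second equality follows from changing variables on each inverse branch of $T$. I would bound these oscillatory integrals by $k$-fold integration by parts against the phase derivative $2\pi(n - mT'(y))$. Combining the bounds $|T^{(j+1)}| \leq C R_0^j j! M_j$ from $T'\in\mathcal{C}^M$ with the super-multiplicativity $M_a M_b \leq M_{a+b}$ (a consequence of log-convexity together with $M_0 = 1$) via a Fa\`a di Bruno argument, this yields, in the non-stationary-phase regimes $|m|\lambda \geq \theta|n|$ and $|n| \geq \theta\sup|T'|\cdot|m|$, a bound
$$|\hat{\mathcal{L}}(n,m)| \leq C_0\left(\frac{R_1}{D(n,m)}\right)^k k! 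M_k$$
for every $k$, with $D\sim |m|$ in the first case and $D\sim |n|$ in the second; taking the infimum over $k$ gives $|\hat{\mathcal{L}}(n,m)| \leq C_0\, w(R_1/D)$.

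Then I would factor $\mathcal{L} = \iota\circ\tilde{\mathcal{L}}$, where $\tilde{\mathcal{L}}\colon \mathcal{H}^R \to \mathcal{H}^{R/\theta}$ is bounded and $\iota\colon \mathcal{H}^{R/\theta}\hookrightarrow \mathcal{H}^R$ is the natural (compact) inclusion. The inclusion is diagonal in the Fourier basis with $n$-th singular value $w(R/(\theta|n|))/w(R/|n|)$; these tend to $0$ by Lemma~\ref{lmpropw}, and after sorting and the change of variable $x = R/(\theta|n|)$ the $k$-th is dominated by $\sup_{0 < x \leq 1/k}\, w(Rx)/w(\theta R x)$, matching \eqref{eqboundsing} with $A=R$. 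Boundedness of $\tilde{\mathcal{L}}$ would follow from Cauchy--Schwarz once I check convergence of $\sum_{n,m} |\hat{\mathcal{L}}(n,m)|^2 w(R/|n|)^2/w((R/\theta)/|m|)^2$: in each non-stationary-phase regime this uses the super-polynomial smallness of ratios $w(Ax)/w(Bx)$ with $A<B$ provided by Lemma~\ref{lmpropw}, and the choice $R/\theta > R_1$ ensures the matrix-coefficient bounds of the previous step overcome the target weight. The standard composition inequality $\sigma_k(\mathcal{L}) \leq \|\tilde{\mathcal{L}}\|\cdot \sigma_k(\iota)$ then yields~\eqref{eqboundsing}.

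I expect the main difficulty to lie in the ``quasi-diagonal'' stationary-phase regime $|n|/|m| \in [\lambda/\theta,\, \theta\sup|T'|]$, where integration by parts gains nothing and only a stationary-phase bound $|\hat{\mathcal{L}}(n,m)| = O(|m|^{-1/2})$ is available. The $O(|m|)$ count of quasi-diagonal pairs per level must be absorbed by the rapid growth of $1/w((R/\theta)/|m|)$, which amounts to taking $R$ sufficiently large relative to the $\mathcal{C}^M$ constants of $T'$; verifying that this balance succeeds for every $\theta\in(1,\lambda)$, rather than only in some smaller range, is the key technical point.
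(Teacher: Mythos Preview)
Your factorization strategy $\mathcal{L}=\iota\circ\tilde{\mathcal{L}}$, with $\iota\colon\mathcal{H}^{R/\theta}\hookrightarrow\mathcal{H}^{R}$ diagonal, is natural and genuinely different from the paper's route; the singular values of $\iota$ do reproduce \eqref{eqboundsing}. The gap is the boundedness of $\tilde{\mathcal{L}}\colon\mathcal{H}^{R}\to\mathcal{H}^{R/\theta}$, precisely in the quasi-diagonal zone you flag, and the remedy you propose does not work. The relevant kernel entry there is $K(m,n)=\hat{\mathcal{L}}(n,m)\,w(R/|n|)/w((R/\theta)/|m|)$, and enlarging $R$ does nothing for this ratio: the stationary-phase bound $|\hat{\mathcal{L}}(n,m)|=O(|m|^{-1/2})$ carries no $R$ at all (and already needs a nondegeneracy of $T''$ that is not assumed), while the weight ratio depends only on $|n|/|m|$ versus $\theta$. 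When $\theta>\sqrt{\lambda}$ the sub-range $|n|/|m|\in(\lambda/\theta,\theta)$ is nonempty, and there $R/|n|>(R/\theta)/|m|$, so by Lemma~\ref{lmpropw} the ratio $w(R/|n|)/w((R/\theta)/|m|)$ tends to infinity with $|m|$; no polynomial factor $|m|^{-1/2}$ can absorb this. Even when $\theta<\sqrt{\lambda}$ and the ratio stays bounded, your Hilbert--Schmidt test still diverges: per output level $|m|$ there are $\sim|m|$ quasi-diagonal inputs, each contributing $\sim|m|^{-1}$, hence a constant per level. So the ``balance'' you hope for is not there.

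The paper sidesteps this entirely by \emph{not} asserting that $\mathcal{L}$ improves the weight. It works on a deliberately softer space, with norm $\sum_m \lambda^{-2m} w(R/\theta^{m-1})^{-2}\|\pi_m u\|_{L^2}^2$ (note the extra damping $\lambda^{-m}$), and splits $\mathcal{L}$ along the Littlewood--Paley diagonal: the upper-triangular piece $\mathcal{L}_c$ (output annulus $\ge$ input annulus) is exactly where the oscillatory bound of Lemma~\ref{lmfonda} applies, and Lemma~\ref{lmmixedterm} turns it into an operator $L^2\to\mathcal{H}$; the lower-triangular piece $\mathcal{L}_b$, which contains the entire stationary-phase regime, is controlled using only the $L^2$-boundedness of $\mathcal{L}$ together with the weight structure (Lemma~\ref{lmcauchy}). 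The singular-value estimate then comes from the finite-rank truncations $A_N+B_N$, taking a supremum rather than the sums your Schur/Hilbert--Schmidt approach would need. If you want to rescue the factorization picture, the boundedness of $\tilde{\mathcal{L}}$ would have to come from a direct real-variable argument on the inverse branches (Fa\`a di Bruno, as in the closure of $\mathcal{C}^M$ under composition), not from pointwise Fourier-kernel bounds in the stationary-phase zone.
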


Let us start the proof of Theorem \ref{thm1quant}. Let $\theta \in \left]1,\lambda\right[$ be fixed once for all. If $n \in \Z$, we write $e_n$ for the function on the circle $e_n : x \mapsto e^{2 i \pi n x}$. Define the family $\p{\pi_n}_{n \in \N}$ of orthogonal projectors on $L^2\p{\mathbb{S}^1}$ by 
\begin{equation*}
\begin{split}
\pi_n u = \left\{ \begin{array}{cc}
\langle u , e_0 \rangle_{L_2} e_0 & \textrm{ if } n = 0 \\
\sum_{\theta^{n-1} \leq \va{k} < \theta^{n}} \langle u, e_k \rangle_{L^2} e_k & \textrm{ otherwise } 
\end{array} \right. .
\end{split}
\end{equation*}
In order to give the definition of the space $\mathcal{H}$ from Theorem \ref{thm1quant}, we need to state a technical but fundamental result. 

\begin{lm}\label{lmmixedterm}
There are constants $C,R > 0$ such that for all $m,n \in \N$ and $u \in L^2\p{\mathbb{S}^1}$ such that $m \geq n$ we have
\begin{equation*}
\begin{split}
\n{\pi_m \mathcal{L} \pi_n u}_{L^2} \leq C w\p{\frac{R}{\theta^m}} \theta^{\frac{m + n}{2}} \n{\pi_n u}_{L^2}. 
\end{split}
\end{equation*}
\end{lm}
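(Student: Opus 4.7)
The plan is to reduce to a pointwise bound on the matrix coefficients $\langle \mathcal{L} e_\ell, e_k \rangle_{L^2}$ for $\va{\ell}$ in the $n$-th Fourier shell and $\va{k}$ in the $m$-th shell, and then recover the $L^2$ bound via a Hilbert--Schmidt estimate. First, the adjoint relation $\langle \mathcal{L} f, g \rangle_{L^2} = \langle f, g \circ T \rangle_{L^2}$ (which follows branchwise from the change-of-variables formula) lets us rewrite
\begin{equation*}
\langle \mathcal{L} e_\ell, e_k \rangle_{L^2} = \int_0^1 e^{2 i \pi \p{\ell x - k T(x)}} \, dx.
\end{equation*}
The phase $\phi(x) = \ell x - k T(x)$ satisfies $\phi'(x) = \ell - k T'(x)$. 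Since $\va{\ell} < \theta^{n+1} \leq \theta^{m+1}$ and $\va{k} \geq \theta^m$, we have $\va{\ell} < \theta \va{k}$; combined with $\va{T'} \geq \lambda > \theta$, this yields the uniform lower bound $\va{\phi'(x)} \geq \p{\lambda - \theta}\va{k}$.

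Next, I would iterate integration by parts on the circle (no boundary terms): setting $L^* f = - \frac{1}{2 i \pi}\p{f/\phi'}'$, one has $\int e^{2 i \pi \phi} \, dx = \int (L^*)^j(1) \, e^{2 i \pi \phi} \, dx$ for every $j \geq 0$. The key estimate, proved by induction on $j$ using $T' \in \mathcal{C}^M$ (so $\va{T^{(p+1)}} \leq C R^p p! M_p$) together with the logarithmic convexity of $M$ via a Faà di Bruno-style expansion of successive derivatives of $1/\phi'$, is
\begin{equation*}
\va{(L^*)^j(1)(x)} \leq C^{j+1} R^j \frac{j! M_j}{\va{k}^j}.
\end{equation*}
Taking the infimum over $j$ and using that $w$ is increasing and $\va{k} \geq \theta^m$ yields
\begin{equation*}
\va{\langle \mathcal{L} e_\ell, e_k \rangle_{L^2}} \leq C \inf_{j \geq 0} \frac{(C R)^j j! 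M_j}{\va{k}^j} \leq C' w\!\p{\frac{R'}{\theta^m}},
\end{equation*}
with $R'$ and $C'$ independent of $k,\ell,m,n$.

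Finally, since the $m$-th shell contains at most $\sim \theta^m$ frequencies and the $n$-th shell at most $\sim \theta^n$, the Hilbert--Schmidt norm of $\pi_m \mathcal{L} \pi_n$ is bounded by
\begin{equation*}
\n{\pi_m \mathcal{L} \pi_n}_{\mathrm{HS}}^2 \leq \sum_{\va{k} \asymp \theta^m} \sum_{\va{\ell} \asymp \theta^n} \va{\langle \mathcal{L} e_\ell, e_k \rangle_{L^2}}^2 \leq C'' \theta^{m+n} w\!\p{\frac{R'}{\theta^m}}^2,
\end{equation*}
and bounding the operator norm by the Hilbert--Schmidt norm gives the claim. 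The main obstacle is the derivative estimate on $(L^*)^j(1)$: one must combine the $\va{k}^{-1}$ gain from each integration by parts with the Denjoy--Carleman bounds on the derivatives of $T'$ in such a way that log-convexity of $M$ absorbs the combinatorial factors coming from the product rule; this is precisely the ingredient the paper flags as underlying Lemma \ref{lmfonda}.
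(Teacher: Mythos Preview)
Your proposal is correct and follows essentially the same route as the paper: you reproduce the matrix-coefficient estimate of Lemma~\ref{lmfonda} (the paper's $L_{a_{k,\ell}}$ is exactly your $L^*$, and the Faà di Bruno plus log-convexity argument you sketch is what the paper carries out), and your Hilbert--Schmidt count is equivalent to the paper's Cauchy--Schwarz computation, which bounds $\n{\pi_m \mathcal{L} \pi_n u}_{L^2}^2$ by $\n{\pi_n u}_{L^2}^2 \sum_{k,\ell} \va{\langle \mathcal{L} e_\ell, e_k\rangle_{L^2}}^2$. The only cosmetic omission is the degenerate case $m=n=0$ (where $k=\ell=0$ and there is no phase to integrate by parts), but that case is handled by absorbing the single coefficient into the constant $C$.
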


We can now define $\mathcal{H} = \mathcal{H}_{\theta,R,M}$ as the space of $u \in L^2\p{\mathbb{S}^1}$ such that ($R$ is the constant from Lemma \ref{lmmixedterm}):
\begin{equation}
\begin{split}
\sum_{m \in \N}\lambda^{-2m} w\p{\frac{R}{\theta^{m-1}}}^{-2} \n{\pi_m u}_{L^2}^2 < + \infty.
\end{split}
\end{equation}
It is easily seen that the square root of the quantity above defines a norm for which $\mathcal{H}$ is a Hilbert space. From Lemma \ref{lmpropw}, the quantity $\lambda^{-m} w\p{\frac{R}{\theta^{m-1}}}^{-1}$ tends to infinity faster than any geometric sequence when $m$ tends to infinity. Consequently, the space $\mathcal{H}$ is continuously contained in $\mathcal{C}^\infty\p{\mathbb{S}^1}$. One can check easily that trigonometric polynomials form a dense subspace of $\mathcal{H}$.

Before proving Lemma \ref{lmmixedterm}, we need another technical result.

\begin{lm}\label{lmfonda}
There are constants $C,R > 0$ such that for all $k,\ell \in \Z$ such that $\va{k} > \theta^{-1} \va{\ell}$ we have
\begin{equation*}
\begin{split}
\va{\langle \mathcal{L} e_\ell, e_k\rangle_{L^2}} \leq C w\p{\frac{R}{\va{k}}}.
\end{split}
\end{equation*}
\end{lm}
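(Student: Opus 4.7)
The plan is to realize $\langle \mathcal{L}e_\ell,e_k\rangle_{L^2}$ as an oscillatory integral on $\mathbb{S}^1$ whose phase is non-stationary, and to extract decay via repeated integration by parts. Applying the transfer-operator duality $\langle \mathcal{L}\varphi,\psi\rangle_{L^2}=\int_{\mathbb{S}^1}\varphi(y)\overline{\psi(Ty)}\,dy$ (obtained from \eqref{eqdefL} by changing variables $x=Ty$ branch by branch) to $\varphi=e_\ell$ and $\psi=e_k$ gives
\[
\langle \mathcal{L}e_\ell,e_k\rangle_{L^2}=\int_{\mathbb{S}^1}e^{i\phi(y)}\,dy,\qquad \phi(y):=2\pi\bigl(\ell y-kT(y)\bigr).
\]
Under the hypotheses $\va{T'}\geq\lambda>\theta$ and $\va{\ell}<\theta\va{k}$, the phase derivative satisfies $\va{\phi'(y)}=2\pi\va{\ell-kT'(y)}\geq 2\pi(\lambda-\theta)\va{k}$ uniformly in $y$, so $1/\phi'$ is smooth of size $1/\va{k}$.

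Introducing the transpose operator $L^*g:=-\frac{d}{dy}\bigl(g/(i\phi')\bigr)$, $n$ successive integrations by parts yield $\int e^{i\phi}\,dy=\int(L^*)^n 1\cdot e^{i\phi}\,dy$ for every $n\in\N$, and hence $\va{\langle\mathcal{L}e_\ell,e_k\rangle_{L^2}}\leq\n{(L^*)^n 1}_{L^\infty}$. The heart of the argument will be the uniform estimate
\[
\n{(L^*)^n 1}_{L^\infty}\leq CR^n\,n!\,M_n\,\va{k}^{-n}
\]
for constants $C,R>0$ independent of $n,k,\ell$. Granted this bound, taking the infimum over $n\in\N$ and using the definition \eqref{eqdefw} of $w$ immediately gives $\va{\langle\mathcal{L}e_\ell,e_k\rangle_{L^2}}\leq Cw(R/\va{k})$, as required.

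To prove the bound on $(L^*)^n 1$, I would show by induction that it is a finite sum of elementary terms of the form $c_\gamma\,\phi^{(\gamma_1)}\cdots\phi^{(\gamma_p)}/(\phi')^{n+p}$, where $\gamma_j\geq 2$ and $\sum_j\gamma_j=n+p$. For each $j\geq 2$ the hypothesis $T'\in\mathcal{C}^M$ combined with \eqref{eqdefdc} gives $\va{\phi^{(j)}}=2\pi\va{k}\va{T^{(j)}}\leq 2\pi\va{k}\,C_0R_0^{j-1}(j-1)!\,M_{j-1}$, which together with $\va{\phi'}\geq c\va{k}$ bounds each elementary term by a constant multiple of $\va{c_\gamma}A^{n+p}\prod_j(\gamma_j-1)!\,M_{\gamma_j-1}\cdot\va{k}^{-n}$. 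Since $\sum_j(\gamma_j-1)=n$, the logarithmic convexity of $M$ (with $M_0=1$) gives $\prod_j M_{\gamma_j-1}\leq M_{\sum_j(\gamma_j-1)}=M_n$, while the multinomial inequality gives $\prod_j(\gamma_j-1)!\leq n!$. Collecting everything, the desired bound follows provided that the weighted combinatorial sum $\sum_\gamma\va{c_\gamma}A^p$ can be controlled by an exponential in $n$.

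The main obstacle will be exactly this last step: the combinatorial bookkeeping describing the coefficients $c_\gamma$ in the expansion of $(L^*)^n 1$ and bounding their weighted sum by $B^n$ for some $B>0$. This is essentially the same computation that underlies the stability of the Denjoy--Carleman class $\mathcal{C}^M$ under composition---precisely where the logarithmic convexity of $M$ enters the story, as explicitly flagged by the remarks preceding the lemma.
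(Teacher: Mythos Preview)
Your approach is essentially identical to the paper's: the same oscillatory integral, the same transpose operator (your $L^*$ is literally the paper's $L_{a_{k,\ell}}$, since $1/(i\phi')=-a_{k,\ell}$), and the same use of the logarithmic convexity of $M$ to collapse the product $\prod M_{\gamma_j-1}$ into $M_n$.

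The one place where your argument is left open---the combinatorial bound $\sum_\gamma\va{c_\gamma}A^p\leq B^n$---is exactly where the paper invests its effort, and it resolves it by a substitution trick rather than by direct bookkeeping. The paper first bounds the derivatives of $a_{k,\ell}=1/(i\phi')$ via Fa\`a di Bruno (this is where the log-convexity is used a first time, together with the identity $\sum_{m_1+2m_2+\dots=n}\frac{n!(m_1+\dots+m_n)!}{m_1!\cdots m_n!}=n!$, itself obtained by applying Fa\`a di Bruno to $x\mapsto 1/(1-x)$). It then writes $L_{a_{k,\ell}}^m(1)=\sum_{n_1+\dots+n_m=m}c_{n_1,\dots,n_m}\prod_j a_{k,\ell}^{(n_j)}$ with \emph{universal} integer coefficients and observes that the remaining combinatorial sum $\sum c_{n_1,\dots,n_m}\prod_j n_j!$ equals $L_a^m(1)(0)$ for the explicit choice $a(x)=1/(1-x)$; a direct computation gives $L_a^m(1)(0)=\frac{(2m)!}{m!\,2^m}\leq 4^m$. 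This ``evaluate the universal coefficients on a model function'' device is the missing ingredient in your sketch, and it is precisely what the remark before the lemma alludes to when it says the proof mirrors the stability of $\mathcal{C}^M$ under composition.
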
 

\begin{proof}
Define the function $a_{k,\ell} : \mathbb{S}^1 \to \C$ by $a_{k,\ell}(x) = \frac{1}{2i\pi(k T'(x) - \ell) }$ and the differential operator
\begin{equation*}
\begin{split}
L_{a_{k,\ell}} : u \mapsto (a_{k,\ell} u)'.
\end{split}
\end{equation*}
Then for all $m \in \N$ we have
\begin{equation*}
\begin{split}
\langle \mathcal{L} e_{\ell}, e_k\rangle_{L^2} = \int_{\mathbb{S}^1} e^{2 i \pi (\ell x - k T(x))} \mathrm{d}x = \int_{\mathbb{S}^1} e^{2 i \pi (\ell x - k T(x))} L^m_{a_{k,\ell}}(1) (x) \mathrm{d}x,
\end{split}
\end{equation*}
so that
\begin{equation*}
\begin{split}
\va{\langle \mathcal{L} e_{\ell}, e_k\rangle_{L^2}} \leq \n{L_{a_{k,\ell}}^m(1)}_{\infty}.
\end{split}
\end{equation*}
In order to bound $L_{a_{k,\ell}}$, we first investigate the derivatives of $a_{k,\ell}$. By Faa di Bruno's formula, for all $n \in \N$ and $x \in \mathbb{S}^1$ we have
\begin{equation*}
\begin{split}
& a_{k,\ell}^{(n)} (x) = \frac{1}{2 i \pi} \sum_{m_1 + 2 m_2 + \dots + n m_n = n} (-1)^{m_1 + \dots m_n} \frac{n! (m_1 + \dots + m_n)!}{m_1! \dots m_n!} \\ & \qquad \qquad \qquad \qquad \qquad \times \frac{1}{(k T'(x) - \ell)^{1 + m_1 + \dots + m_n}} \prod_{j=1}^{n} \p{ \frac{k T^{(1 + j)}(x)}{j!}}^{m_j}.
\end{split}
\end{equation*}
Thus, since $T'$ belongs to the class $\mathcal{C}^M$,
\begin{equation*}
\begin{split}
& \va{a_{k,\ell}^{(n)} (x)} \leq \frac{1}{2 \pi} \sum_{m_1 + 2 m_2 + \dots + n m_n = n} \frac{n! (m_1 + \dots + m_n)!}{m_1! \dots m_n!} \\ & \qquad \qquad \qquad \qquad \qquad \times \frac{1}{\va{k T'(x) - \ell}^{1 + m_1 + \dots + m_n}} \prod_{j=1}^{n} \p{ \va{k} C R^j M_j}^{m_j},
\end{split}
\end{equation*}
where $C,R > 0$ are from the definition of $\mathcal{C}^M$. From the log convexity of $\p{M_j}_{j \in \N}$, it follows that if $m_1 + \dots + n m_n = n$ then (use the fact that $\p{\frac{M_j}{M_{j-1}}}_{j \geq 1}$ is increasing)
\begin{equation}\label{eqlogconv}
\begin{split}
\prod_{j=1}^n M_j^{m_j} \leq M_n.
\end{split}
\end{equation}
Notice also that $\va{k T'(x) - \ell } \geq \lambda \va{k} - \va{\ell} > \delta \va{k}$, where $\delta = \lambda - \theta$. Thus we have (assuming that $C > 1$ and $\delta < 1$, which is true without loss of generality)
\begin{equation*}
\begin{split}
& \va{a_{k,\ell}^{(n)} (x)} \leq \frac{M_n}{2 \pi \delta \va{k}}  \p{\frac{CR}{\delta}}^n \sum_{m_1 + 2 m_2 + \dots + n m_n = n} \frac{n! (m_1 + \dots + m_n)!}{m_1! \dots m_n!}.
\end{split}
\end{equation*}
Now, notice that
\begin{equation*}
\sum_{m_1 + 2 m_2 + \dots + n m_n = n} \frac{n! (m_1 + \dots + m_n)!}{m_1! \dots m_n!} = \begin{cases} 1 & \textrm{ if } n = 0 \\ 2^{n-1}n! & \textrm{ otherwise} \end{cases}.
\end{equation*}
Indeed, as a consequence of Faa di Bruno's formula, the sum in the left hand side is the $n$th derivative at zero of the function
\begin{equation*}
\begin{split}
x \mapsto 1 + \frac{x}{1-2x} = \frac{1}{1 - \frac{x}{1-x}}.
\end{split}
\end{equation*}

Notice then that for all $m \in \N$ there are natural integer coefficients that do not depend on $a_{k,\ell}$ such that 
\begin{equation}\label{eqruse}
\begin{split}
L_{a_{k,\ell}}^m 1 = \sum_{n_1 + \dots + n_m = m} c_{n_1,\dots,n_m} \prod_{j=1}^m a_{k,\ell}^{(n_j)}.
\end{split}
\end{equation}
Thus, using \eqref{eqlogconv} again,
\begin{equation*}
\begin{split}
\n{L_{a_{k,\ell}}^m(1)}_{\infty} & \leq \sum_{n_1 + \dots + n_m = m} c_{n_1,\dots,n_m} \prod_{j=1}^m \p{n_j! \frac{M_{n_j}}{2 \pi \delta \va{k} } \p{\frac{2CR}{\delta}}^{n_j}} \\
    & \leq M_m \p{\frac{CR}{\pi \delta^2 \va{k}}}^m\sum_{n_1 + \dots + n_m = m} c_{n_1,\dots,n_m} \prod_{j=1}^m n_j!.
\end{split}
\end{equation*}
Now replacing $a_{k,\ell}$ by the function $a : x \mapsto \frac{1}{1 - x}$ in \eqref{eqruse} we have that (notice that $L_a^m (1) :x \mapsto \frac{(2m)!}{m! 2^m} \frac{1}{(1-x)^{2m}}$, where $L_a$ is the differential operator defined by $L_a(u) = (au)'$)
\begin{equation*}
\begin{split}
\sum_{n_1 + \dots + n_m = m} c_{n_1,\dots,n_m} \prod_{j=1}^m n_j! = L_a^m(1)(0) = \frac{(2m)!}{m! 2^m}.
\end{split}
\end{equation*}
Thus
\begin{equation*}
\begin{split}
\n{L_{a_{k,\ell}}^m(1)}_{\infty} \leq m! M_m \frac{(2m)!}{m!^2} \p{\frac{CR}{2 \pi \delta^2 \va{k}}}^m.
\end{split}
\end{equation*}
We only need to notice that $\frac{(2m)!}{m!^2}$ grows at most exponentially to end the proof (with different values of $C$ and $R$ of course).
\end{proof}

We can now prove Lemma \ref{lmmixedterm}.

\begin{proof}[Proof of Lemma \ref{lmmixedterm}]
We will only deal with the case $n \neq 0$, the case $n=0$ is similar. Let us compute (here $\mathcal{L}^*$ denotes the $L^2$-adjoint of $\mathcal{L}$, that is the Koopman operator):
\begin{equation*}
\begin{split}
\n{\pi_m \mathcal{L} \pi_n u}_{L^2}^2 & = \sum_{\theta^{m-1} \leq \va{k} < \theta^{m}} \va{\langle \mathcal{L} \pi_n u, e_k \rangle_{L^2}}^2 =  \sum_{\theta^{m-1} \leq \va{k} < \theta^{m}} \va{\langle \pi_n u, \mathcal{L}^* e_k \rangle_{L^2}}^2 \\
    & \leq \n{\pi_n u}_{L^2}^2 \sum_{\theta^{m-1} \leq \va{k} < \theta^{m}} \n{\pi_n \mathcal{L}^* e_k}_{L^2}^2 \\
    & \leq \n{\pi_n u}_{L^2}^2 \sum_{\theta^{m-1} \leq \va{k} < \theta^{m}} \sum_{\theta^{n-1} \leq \va{\ell} < \theta^{n}} \va{\langle \mathcal{L} e_\ell, e_k \rangle_{L^2}}^2.
\end{split}
\end{equation*}
Now, if $\theta^{m-1} \leq \va{k} < \theta^{m}$ and $\theta^{n-1} \leq \va{\ell} < \theta^{n}$ then we have
\begin{equation*}
\begin{split}
\va{k} \geq \theta^{m-1} \geq \theta^{n-1} > \theta^{-1} \va{\ell}
\end{split}
\end{equation*}
and thus by Lemma \ref{lmfonda} we have (recall that $w$ is increasing)
\begin{equation*}
\begin{split}
\va{\langle \mathcal{L} e_\ell, e_k \rangle_{L^2}} \leq C w\p{\frac{R}{\va{k}}} \leq C w\p{\frac{\theta R}{\theta^m}}.
\end{split}
\end{equation*}
Consequently,
\begin{equation*}
\begin{split}
\n{\pi_m \mathcal{L} \pi_n u}_{L^2}^2 & \leq 4 C^2 \n{\pi_n u}^2_{L^2} \p{\theta^{m} - \theta^{m-1} +1} \p{\theta^{n} - \theta^{n-1} +1} w\p{\frac{\theta R}{\theta^m}}^2
\end{split}
\end{equation*}
and the result follows.
\end{proof}

We will need another technical result to prove Theorem \ref{thm1quant}. For all $N \in \N$, define the following finite rank operators on $\mathcal{H}$:
\begin{equation}\label{eqfiniterankoperators}
\begin{split}
A_N = \sum_{0 \leq n \leq m \leq N} \pi_m \mathcal{L} \pi_n \textrm{ and } B_N =\sum_{0 \leq m < n \leq N} \pi_m \mathcal{L} \pi_n.
\end{split}
\end{equation}
We will use these finite rank operators to approximate the transfer operator $\mathcal{L}$, to do so we need the following lemma.

\begin{lm}\label{lmcauchy}
There is a constant $C > 0$ such that for all $M \geq N \geq 0$ we have
\begin{equation*}
\begin{split}
\n{A_N - A_M}_{L^2 \to \mathcal{H}} \leq C \sup_{m > N} \frac{w\p{\frac{R}{\theta^{m}}}}{w\p{\frac{R}{\theta^{m-1}}}}
\end{split}
\end{equation*}
and
\begin{equation*}
\begin{split}
\n{B_N - B_M}_{\mathcal{H} \to \mathcal{H}} \leq C \sup_{m \geq N} \frac{w\p{\frac{R}{\theta^{m}}}}{w\p{\frac{R}{\theta^{m-1}}}}.
\end{split}
\end{equation*}
\end{lm}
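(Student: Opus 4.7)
The plan is to treat the two estimates by different means, reflecting how Lemma \ref{lmmixedterm} applies to the individual blocks $\pi_m \mathcal{L} \pi_n$. In $A_M - A_N$ every block has $m \geq n$, so Lemma \ref{lmmixedterm} applies directly. In $B_M - B_N$ one has $m < n$; Lemma \ref{lmfonda} yields no decay in this range, so I would fall back on the trivial $L^2$-boundedness of $\mathcal{L}$ (immediate from \eqref{eqdefL} and $\va{T'} \geq \lambda$) and extract all the smoothing from the rapid growth of the weights $a_m \coloneqq \lambda^{-2m} w\p{R/\theta^{m-1}}^{-2}$ that define the $\mathcal{H}$-norm. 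This asymmetry between the two cases is the main conceptual obstacle.

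For $A_M - A_N \colon L^2 \to \mathcal{H}$, I would fix $u \in L^2\p{\mathbb{S}^1}$ and write, for $N < m' \leq M$,
\begin{equation*}
\pi_{m'}\p{A_M - A_N} u = \sum_{n=0}^{m'} \pi_{m'} \mathcal{L} \pi_n u.
\end{equation*}
Lemma \ref{lmmixedterm} bounds each summand by $C w\p{R/\theta^{m'}} \theta^{\p{m'+n}/2} \n{\pi_n u}_{L^2}$; Cauchy--Schwarz in $n$ combined with $\sum_{n=0}^{m'} \theta^n = O\p{\theta^{m'}}$ then yields $\n{\pi_{m'}\p{A_M - A_N} u}_{L^2} \leq C' w\p{R/\theta^{m'}} \theta^{m'} \n{u}_{L^2}$. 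Weighting by $\lambda^{-2m'} w\p{R/\theta^{m'-1}}^{-2}$ and summing over $N < m' \leq M$ produces the convergent geometric series $\sum_{m' > N} \p{\theta/\lambda}^{2m'}$ (recall $\theta < \lambda$) multiplied by $\p{w\p{R/\theta^{m'}}/w\p{R/\theta^{m'-1}}}^2$, which I bound by the claimed supremum.

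For $B_M - B_N \colon \mathcal{H} \to \mathcal{H}$ I would start from $\n{\pi_{m'} \mathcal{L} \pi_n u}_{L^2} \leq \n{\mathcal{L}}_{L^2 \to L^2} \n{\pi_n u}_{L^2}$ together with orthogonality of the $\pi_n$, giving
\begin{equation*}
\n{\pi_{m'}\p{B_M - B_N}u}_{L^2}^2 \leq \n{\mathcal{L}}_{L^2 \to L^2}^2 \sum_{\max\p{N,m'} < n \leq M} \n{\pi_n u}_{L^2}^2,
\end{equation*}
then multiply by $a_{m'}$ and exchange the order of summation, reducing the inner $m'$-sum to $\sum_{m' < n} a_{m'}$. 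The key step is the uniform bound $\sum_{m' < n} a_{m'} \leq C a_{n-1}$: by Lemma \ref{lmpropw}, $w\p{R/\theta^m}/w\p{R/\theta^{m-1}} \to 0$, hence $a_{m+1}/a_m \to \infty$, so $a_m$ grows super-geometrically and its partial sums are dominated by the last term. The quotient $a_{n-1}/a_n = \lambda^2 \p{w\p{R/\theta^{n-1}}/w\p{R/\theta^{n-2}}}^2$ then produces the claimed supremum after a harmless one-step reindexing that is absorbed into $C$. Converting the qualitative statement of Lemma \ref{lmpropw} into uniform super-geometric growth of $a_m$ is the one technical step that needs genuine verification.
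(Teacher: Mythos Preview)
Your proof is correct and matches the paper's approach: Lemma \ref{lmmixedterm} with Cauchy--Schwarz for $A_M-A_N$, and $L^2$-boundedness of $\mathcal{L}$ with orthogonality and exchange of summation for $B_M-B_N$. You are in fact more careful than the paper at the key step $\sum_{m'<n} a_{m'} \leq C a_{n-1}$, correctly tracing it to the super-geometric growth of $a_m$ coming from $w(\mu x)/w(x)\to 0$ in Lemma \ref{lmpropw}, whereas the paper's parenthetical ``we use the fact that $w$ is increasing'' does not by itself justify that inequality.
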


\begin{proof}
If $u \in \mathcal{H}$ then we have
\begin{equation*}
\begin{split}
\p{A_M - A_N}u = \sum_{\substack{ 0 \leq n \leq m \leq M \\ N < m}} \pi_m \mathcal{L} \pi_n u
\end{split}
\end{equation*}
and thus
\begin{equation*}
\begin{split}
\n{\p{A_M - A_N}u}_{\mathcal{H}}^2 & = \sum_{N < m \leq M} \lambda^{-2m} w\p{\frac{R}{\theta^{m-1}}}^{-2} \n{\pi_m \mathcal{L} \sum_{n \leq m} \pi_n u}_{L^2}^2.
\end{split}
\end{equation*}
But if $N < m \leq M$ we have with Lemma \ref{lmmixedterm}
\begin{equation*}
\begin{split}
\n{\pi_m \mathcal{L} \sum_{n \leq m} \pi_n u}_{L^2} & \leq \sum_{n \leq m} \n{\pi_m \mathcal{L} \pi_n u}_{L^2} \\
    & \leq C w\p{\frac{R}{\theta^m}} \theta^{\frac{m}{2}} \sum_{n \leq m}\theta^{\frac{n}{2}} \n{\pi_n u}_{L^2} \\
    & \leq \widetilde{C} w\p{\frac{R}{\theta^m}} \theta^{m} \sqrt{\sum_{n \leq m} \n{\pi_n u}_{L^2}^2},
\end{split}
\end{equation*}
and thus (for some new constant $C$ that may change from one line to another)
\begin{equation*}
\begin{split}
\n{\p{A_M - A_N}u}_{\mathcal{H}}^2 & \leq C \sup_{m > N} \p{\frac{w\p{\frac{R}{\theta^{m}}}}{w\p{\frac{R}{\theta^{m-1}}}}}^2 \sum_{n \geq 0} \p{\sum_{m  > N} \frac{\theta^{2m}}{\lambda^{2m}}} \n{\pi_n u}_{L^2}^2 \\
   & \leq C \sup_{m > N} \p{\frac{w\p{\frac{R}{\theta^{m}}}}{w\p{\frac{R}{\theta^{m-1}}}}}^2 \sum_{n \geq 0} \n{\pi_n u}_{L^2}^2 \\
   & \leq C \sup_{m > N} \p{\frac{w\p{\frac{R}{\theta^{m}}}}{w\p{\frac{R}{\theta^{m-1}}}}}^2 \n{u}_{L^2}^2.
\end{split}
\end{equation*}

Before proving the second estimate, let us show that there is a constant $C > 0$ such that for every integer $n$ we have
\begin{equation}\label{eq:unpeudedetail}
\begin{split}
\sum_{0 \leq m < n} \lambda^{-2m} w \p{\frac{R}{\theta^{m-1}}}^{-2} \leq C \lambda^{-2n} w\p{\frac{R}{\theta^{n-2}}}^{-2}.
\end{split}
\end{equation}
To do so, recall the function $k(x)$ from the proof of Lemma \ref{lmpropw} and choose $m_0$ large enough so that 
\begin{equation*}
\begin{split}
\frac{\lambda^2}{\theta^{2k\p{\frac{R}{\theta^{m_0 - 1}}}}} < 1.
\end{split}
\end{equation*}
Then, when $n$ is large enough, we may split the sum in \eqref{eq:unpeudedetail} between the sum over $0 \leq m < m_0$ and the sum over $m_0 \leq m < n$. The first sum is independent on $n$, and can consequently be ingored since the right hand side of \eqref{eq:unpeudedetail} tends to $+ \infty$ when $n$ tends to $+ \infty$, according to Lemma \ref{lmpropw}. To bound the second sum, recall \eqref{eqdecayw} to see that
\begin{equation*}
\begin{split}
& \lambda^{2n} w\p{\frac{R}{\theta^{n-1}}}^2 \sum_{m_0 \leq m < n} \lambda^{-2m} w \p{\frac{R}{\theta^{m-1}}}^{-2} \\ & \qquad \qquad \qquad \qquad \leq \sum_{m_0 \leq m < n} \lambda^{2(n-m)} \p{\theta^{m-n}}^{2k\p{\frac{R}{\theta^{m-1}}}} \\
    & \qquad \qquad \qquad \qquad \leq \sum_{\ell \geq 0} \p{\frac{\lambda^2}{\theta^{2k\p{\frac{R}{\theta^{m_0} - 1}}}}}^{\ell} < + \infty.
\end{split}
\end{equation*}

We turn now to the proof of the second estimate and write for $u \in \mathcal{H}$
\begin{equation*}
\begin{split}
\p{B_M - B_N}u = \sum_{\substack{ 0 \leq m < n \leq M \\ N <n}} \pi_m \mathcal{L} \pi_n u
\end{split}
\end{equation*}
from which we get (we use \eqref{eq:unpeudedetail} on the fifth line and $C$ may change from one line to another)
\begin{align*}
\n{\p{B_M - B_N}u}_{\mathcal{H}}^2 & = \sum_{0 \leq m < M} \lambda^{-2m} w\p{\frac{R}{\theta^{m-1}}}^{-2} \n{\pi_m \mathcal{L} \sum_{n > \max(m,N)} \pi_n u}_{L^2}^2 \\
      & \leq C \sum_{0 \leq m < M} \lambda^{-2m} w\p{\frac{R}{\theta^{m-1}}}^{-2} \n{ \sum_{n > \max(m,N)} \pi_n u}_{L^2}^2 \\
      & \leq C \sum_{0 \leq m < M} \lambda^{-2m} w\p{\frac{R}{\theta^{m-1}}}^{-2} \sum_{n > \max(m,N)} \n{ \pi_n u}_{L^2}^2 \\
      & \leq C \sum_{n > N}  \p{\sum_{0 \leq m < n} \lambda^{-2m} w\p{\frac{R}{\theta^{m-1}}}^{-2}} \n{\pi_n u}_{L^2}^2 \\
      & \leq C \sum_{n > N} \lambda^{-2n} w\p{\frac{R}{\theta^{n-2}}}^{-2} \n{\pi_n u}_{L^2}^2 \\
      & \leq C\sup_{n > N} \p{\frac{w\p{\frac{R}{\theta^{n-1}}}}{w\p{\frac{R}{\theta^{n-2}}}}}^2 \n{u}_{\mathcal{H}}^2. 
\end{align*}
\end{proof}

We are now in position to end the proof of Theorem \ref{thm1quant}.

\begin{proof}[Proof of Theorem \ref{thm1quant}]
Lemma \ref{lmcauchy} implies in particular that the sequence $\p{A_N}_{N \in \N}$ is a Cauchy sequence of bounded operators from $L^2$ to $\mathcal{H}$ and thus converges to a bounded operator $\mathcal{L}_c : L^2 \to \mathcal{H}$. For the same reason, $\p{B_N}_{N \in \N}$ converges to a bounded operator $\mathcal{L}_b : \mathcal{H} \to \mathcal{H}$. By checking the identity on trigonometric polynomials, we see that
\begin{equation}\label{eqnpd}
\begin{split}
\mathcal{L} = \mathcal{L}_c + \mathcal{L}_b.
\end{split}
\end{equation}
In particular, $\mathcal{L}$ is bounded (and even compact, as a limit of finite rank operators) from $\mathcal{H}$ to itself.

The only thing that we still need to check is the bound \eqref{eqboundsing} on singular values of the operator $\mathcal{L}$ acting on $\mathcal{H}$. If $N \in \N$, notice that the operator $A_N + B_N$ has rank at most $2 \lceil \theta^{N} \rceil + 1$ (where $A_N$ and $B_N$ are defined by \eqref{eqfiniterankoperators}). From Lemma \ref{lmcauchy} (letting $M$ tend to infinity), we deduce that
\begin{equation*}
\begin{split}
\n{\mathcal{L} - \p{A_N+ B_N}}_{\mathcal{H} \to \mathcal{H}} \leq 2 C \sup_{m \geq N} \frac{w\p{\frac{R}{\theta^m}}}{w\p{\frac{R}{\theta^{m-1}}}}
\end{split}
\end{equation*}
and thus (see \cite[Theorem IV.2.5]{Gohb})
\begin{equation*}
\begin{split}
\sigma_{2 \lceil \theta^{N} \rceil + 2} \leq 2 C \sup_{m \geq N} \frac{w\p{\frac{R}{\theta^m}}}{w\p{\frac{R}{\theta^{m-1}}}}.
\end{split}
\end{equation*}
The result then follows from the fact that the sequence $\p{\sigma_k}_{k \in \N}$ is decreasing.
\end{proof}

\section{Nuclear power decomposition}\label{secnpd}

We saw in the proof of Theorem \ref{thm1quant} that the transfer operator $\mathcal{L}$ may be written as the sum \eqref{eqnpd} of the operators $\mathcal{L}_b$ and $\mathcal{
L}_c$. In this section, we show that this is a nuclear power decomposition in the spirit of \cite{Tsu}, and we investigate the consequences of the existence of such a decomposition, in particular in terms of dynamical determinants (see Propositions \ref{propfact} and \ref{propbound}). Thus, we will prove in particular Theorem \ref{thmintro2}.

We first investigate the operator $\mathcal{L}_b$. To do so, define the function $g : \N \mapsto \R_+^*$ by
\begin{equation}\label{eqdefg}
\begin{split}
g(N) = \sup_{m \geq N} \frac{w\p{\frac{R}{\theta^{m}}}}{w\p{\frac{R}{\theta^{m-1}}}}
\end{split}
\end{equation}
and notice that $g(N) \underset{N \to + \infty}{\to} 0$ by Lemma \ref{lmpropw}. The operator $\mathcal{L}_b$ is morally strictly upper triangular, the following proposition uses the function $g$ to quantify the fact that $\mathcal{L}_b$ is morally nilpotent.

\begin{prop}\label{propfastdecay}
There is a constant $C > 0$ such that for all $n \in \N^*$ we have
\begin{equation*}
\begin{split}
\n{\mathcal{L}_b^n}_{\mathcal{H} \to \mathcal{H}} \leq C^n \prod_{k=0}^{n-1} g(k).
\end{split}
\end{equation*}
In particular, the spectral radius of $\mathcal{L}_b$ is zero (i.e. $\mathcal{L}_b$ is quasi-nilpotent).
\end{prop}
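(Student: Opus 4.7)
The plan is to exploit that $\mathcal{L}_b$ is strictly block-upper-triangular with respect to the decomposition $L^2\p{\mathbb{S}^1} = \bigoplus_n \pi_n L^2\p{\mathbb{S}^1}$, together with the tail-norm estimate already extracted in Lemma \ref{lmcauchy}. First I would notice that since $\mathcal{L}_b = \sum_{0 \leq m < n} \pi_m \mathcal{L} \pi_n$ only contains terms with $m < n$, the block $\pi_j \mathcal{L}_b \pi_k$ vanishes unless $j < k$. Iterating, $\pi_{j_0} \mathcal{L}_b^n \pi_{j_n}$ is a sum over strictly increasing chains $j_0 < j_1 < \cdots < j_n$, which forces $j_n \geq n$. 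Consequently $\mathcal{L}_b^n P_{n-1} = 0$ where $P_{n-1} \coloneqq \sum_{k \leq n-1} \pi_k$; equivalently $\mathcal{L}_b^n = \mathcal{L}_b^n Q_{n-1}$ with $Q_N \coloneqq I - P_N$.

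Next, letting $M \to +\infty$ in the second estimate of Lemma \ref{lmcauchy} gives $\n{\mathcal{L}_b - B_N}_{\mathcal{H} \to \mathcal{H}} \leq C g(N)$, and a direct inspection of the defining series shows $\mathcal{L}_b - B_N = \mathcal{L}_b Q_N$. Hence $\n{\mathcal{L}_b Q_N}_{\mathcal{H} \to \mathcal{H}} \leq C g(N)$ for every $N \in \N$. Combining this with the vanishing identity of the previous paragraph yields the key factorization $\mathcal{L}_b^n = \mathcal{L}_b^{n-1} \mathcal{L}_b Q_{n-1}$ for $n \geq 1$, so that $\n{\mathcal{L}_b^n}_{\mathcal{H} \to \mathcal{H}} \leq \n{\mathcal{L}_b^{n-1}}_{\mathcal{H} \to \mathcal{H}} \cdot C g(n-1)$. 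A straightforward induction, with base $\n{\mathcal{L}_b}_{\mathcal{H}} = \n{\mathcal{L}_b Q_0}_{\mathcal{H}} \leq C g(0)$ (using $B_0 = 0$), then gives the announced bound $\n{\mathcal{L}_b^n}_{\mathcal{H} \to \mathcal{H}} \leq C^n \prod_{k=0}^{n-1} g(k)$.

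Finally, the quasi-nilpotence of $\mathcal{L}_b$ drops out: since $g(k) \underset{k \to +\infty}{\to} 0$ by Lemma \ref{lmpropw} (a consequence of $w(\mu x)/w(x) \to 0$ as $x \to 0$), the geometric mean $\p{\prod_{k=0}^{n-1} g(k)}^{1/n}$ tends to $0$, hence $\n{\mathcal{L}_b^n}_{\mathcal{H}}^{1/n} \to 0$ and the spectral radius of $\mathcal{L}_b$ vanishes. I do not expect any real obstacle here: once the triangular vanishing $\mathcal{L}_b^n = \mathcal{L}_b^n Q_{n-1}$ is spotted, the entire argument reduces to a clean one-line induction plugged on top of Lemma \ref{lmcauchy}.
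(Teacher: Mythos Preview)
Your argument is correct and is essentially the same as the paper's: both exploit the strict block-upper-triangularity of $\mathcal{L}_b$ to produce the factorization $\mathcal{L}_b^n = \prod_{k=0}^{n-1} (\mathcal{L}_b - B_k)$ (your identity $\mathcal{L}_b - B_N = \mathcal{L}_b Q_N$ makes this equivalence explicit), and both then apply Lemma~\ref{lmcauchy}. The only cosmetic difference is that the paper first establishes the finite-rank identity $(B_N - B_k) B_N = (B_N - B_k)(B_N - B_{k+1})$ and passes to the limit, whereas you argue directly on $\mathcal{L}_b$ via the vanishing $\mathcal{L}_b^n P_{n-1} = 0$.
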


\begin{proof}
Notice that if $k < N$ then from the definition of $B_N$ it comes that
\begin{equation*}
\begin{split}
\p{B_N -B_k} B_N = \p{B_N - B_k}\p{B_N - B_{k+1}}.
\end{split}
\end{equation*}
Thus if $N \geq n-1$ we have
\begin{equation*}
\begin{split}
B_N^n = \prod_{k=0}^{n-1}\p{B_N - B_k}.
\end{split}
\end{equation*}
Letting $N$ tends to infinity, we get that
\begin{equation*}
\begin{split}
\mathcal{L}_b^n = \prod_{k=0}^{n-1} \p{\mathcal{L}_b - B_k}
\end{split}
\end{equation*}
and the result follows from Lemma \ref{lmcauchy}.
\end{proof}

Then we investigate the operator $\mathcal{L}_c$ (as an operator from $\mathcal{H}$ to itself).

\begin{prop}\label{propnucpart}
There are constants $C,R' > 0$ such that, if we define the function $f$ for $x > 0$ by
\begin{equation}\label{eqdeff}
\begin{split}
f(x) = x^\alpha w\p{\frac{R'}{x}},
\end{split}
\end{equation}
where $\alpha = \frac{\log \lambda}{\log \theta}$, and if $\p{s_\ell}_{\ell \in \N}$ denotes the sequence of singular values of $\mathcal{L}_c$ acting on $\mathcal{H}$, then, for all $\ell \geq 1$, we have
\begin{equation}\label{eqavecunsupmieux}
\begin{split}
s_\ell \leq C f(\ell).
\end{split}
\end{equation}
In particular, $\mathcal{L}_c$ is nuclear of order $0$.
\end{prop}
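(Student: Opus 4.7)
The plan is to factor the operator $\mathcal{L}_c\colon\mathcal{H}\to\mathcal{H}$ through $L^2\p{\mathbb{S}^1}$ and reduce the problem to computing the singular values of the inclusion $\iota\colon\mathcal{H}\hookrightarrow L^2$, which is diagonal in the Fourier basis. Indeed, Lemma~\ref{lmcauchy} already exhibits $\p{A_N}_{N\in\N}$ as a Cauchy sequence of bounded operators from $L^2$ to $\mathcal{H}$, so it converges to $\mathcal{L}_c\colon L^2\to\mathcal{H}$ whose action on $\mathcal{H}$ is precisely the composition $\mathcal{L}_c\circ\iota$. The classical singular value inequality for a composition then gives
\[
s_\ell\p{\mathcal{L}_c|_\mathcal{H}} \leq \n{\mathcal{L}_c}_{L^2\to\mathcal{H}}\,s_\ell\p{\iota},
\]
so everything reduces to a bound on $s_\ell\p{\iota}$.

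Next I would compute $s_\ell\p{\iota}$ explicitly. Setting $\tilde e_k\coloneqq \lambda^m w\p{R/\theta^{m-1}}e_k$ for $e_k$ in the $m$-th frequency shell (with the convention $w\p{R/\theta^{-1}}\coloneqq w\p{R\theta}$ when $m=0$) produces an orthonormal basis $\p{\tilde e_k}$ of $\mathcal{H}$, and $\iota\tilde e_k=\lambda^m w\p{R/\theta^{m-1}}e_k$ is orthogonal in $L^2$. Hence $\iota$ is diagonal and its singular values are obtained by sorting the positive numbers $d_m\coloneqq\lambda^m w\p{R/\theta^{m-1}}$ in decreasing order, each counted with multiplicity $\va{\pi_m}\leq C_\theta\theta^m$. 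Since $w$ vanishes at all orders at $0$, we have $d_m\to 0$ (so $\iota$ is compact), and moreover $d_m/d_{m+1}=\lambda^{-1}w\p{R/\theta^{m-1}}/w\p{R/\theta^m}\to+\infty$ by Lemma~\ref{lmpropw}, so the sequence $\p{d_m}$ is eventually strictly decreasing.

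For $\ell$ beyond the finite block where monotonicity may fail, selecting the unique $m$ with $\sum_{m'<m}\va{\pi_{m'}}<\ell\leq\sum_{m'\leq m}\va{\pi_{m'}}$ forces $m\approx\log_\theta\ell$ and $s_\ell\p{\iota}\leq d_m$. The resulting estimates $\lambda^m\leq C\ell^\alpha$ and $R/\theta^{m-1}\leq R'/\ell$ (for $R'$ a sufficiently large multiple of $R$), together with the monotonicity of $w$, give $s_\ell\p{\iota}\leq C\ell^\alpha w\p{R'/\ell}=Cf(\ell)$; enlarging $C$ to absorb the finitely many exceptional values of $\ell$ yields \eqref{eqavecunsupmieux}. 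Finally, the inequality $w\p{R'/\ell}\leq\p{R'/\ell}^k k!M_k$ (valid for every $k\in\N$) shows that $f$ decays faster than any inverse polynomial, so $\p{s_\ell}$ is $p$-summable for every $p>0$ and $\mathcal{L}_c$ is nuclear of order~$0$. The only subtlety is the sorting step: the $d_m$'s need not be monotone for small $m$, but this perturbs only finitely many singular values and is absorbed into the constant~$C$.
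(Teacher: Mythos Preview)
Your proof is correct and follows essentially the same approach as the paper: both factor $\mathcal{L}_c|_{\mathcal{H}}$ as the bounded map $\mathcal{L}_c\colon L^2\to\mathcal{H}$ composed with the diagonal inclusion $\iota\colon\mathcal{H}\hookrightarrow L^2$, read off the singular values of $\iota$ from the weights $d_m=\lambda^m w\p{R/\theta^{m-1}}$, and convert the shell index $m$ to the singular-value index $\ell$ via $\ell\asymp\theta^m$. The only cosmetic difference is that you handle the eventual monotonicity by showing $d_m/d_{m+1}\to\infty$ directly, whereas the paper shows $f$ is eventually decreasing via the auxiliary function $k(x)$ from the proof of Lemma~\ref{lmpropw}.
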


\begin{proof}
Since $\mathcal{L}_c$ is continuous from $L^2$ to $\mathcal{H}$, we have the following bound on its singular values as a compact operator from $\mathcal{H}$ to itself:
\begin{equation*}
\begin{split}
\forall m \in \N: s_{2 \lceil \theta^{m} \rceil + 1} \leq C \sup_{p \geq m} \lambda^{p} w\p{\frac{R}{\theta^{p-1}}}
\end{split}
\end{equation*}
for some constant $C > 0$. Since $f$ does not vanish, we only need to prove \eqref{eqavecunsupmieux} for $\ell$ large. Thus, let $\ell$ be large and let $m$ be the largest integer such that $\ell \geq 2 \lceil \theta^{m} \rceil +1$. Then, we have
\begin{equation}\label{eqdetail}
\begin{split}
s_\ell \leq s_{2 \lceil \theta^{m} \rceil + 1} \leq C \sup_{y \geq \theta^m} y^\alpha w\p{\frac{\theta R}{y}}.
\end{split}
\end{equation}
Here, we performed the change of variables ``$y = \theta^p$". Then, notice that $\theta^m \geq \frac{1}{2\theta} \ell - \frac{3}{2 \theta} \geq \frac{1}{4 \theta}\ell$ (provided that $\ell$ is large enough). Hence, we deduce from \eqref{eqdetail} that, for $\ell$ large enough, we have (with the change of variables ``$x = 4 \theta y$'' and taking $R' = 4 \theta^2 R$ in the definition of $f$)
\begin{equation}\label{eqpresque}
\begin{split}
s_\ell \leq C \sup_{x \geq \ell} f(x).
\end{split}
\end{equation}
Recall the function $k(x)$ from the proof of Lemma \ref{lmpropw} and use \eqref{eqdecayw} to see that for all $x \geq \ell$ we have
\begin{equation}
\begin{split}
\frac{f(x)}{f(\ell)} \leq \p{\frac{x}{\ell}}^{\alpha - k\p{\frac{R'}{\ell}}},
\end{split}
\end{equation}
but if $\ell$ is large enough we have $k\p{\frac{R'}{\ell}} > \alpha$ and consequently $f(x) \leq f(\ell)$. Hence, for $\ell$ large enough we have
\begin{equation}
\begin{split}
f(\ell) = \sup_{x \geq \ell} f(x),
\end{split}
\end{equation}
and \eqref{eqavecunsupmieux} follows from \eqref{eqpresque}. To see that $\mathcal{L}_c$ is nuclear of order $0$, recall from Lemma \ref{lmpropw} that $w$ vanishes at all orders in $0$. Hence, $f$ decays faster than any polynomial and so does the sequence of singular values of $\mathcal{L}_c$.
\end{proof}

Now, following \cite{Tsu}, we want to use the nuclear power decomposition in order to study the dynamical determinant $d$ defined by \eqref{eqdefdet}. This is the point of Proposition \ref{propfact}, that completes the proof of Theorem \ref{thmintro2}.

\begin{prop}\label{propfact}
If $z$ is small enough then we have
\begin{equation}\label{eqtemp}
\begin{split}
d(z) = \det\p{I - z\p{I - z\mathcal{L}_b}^{-1} \mathcal{L}_c }.
\end{split}
\end{equation}
In particular, $d$ has a holomorphic extension to $\C$ whose zeroes are exactly the inverses of the non-zero eigenvalues of $\mathcal{L}$ acting on $\mathcal{H}$ (counted with multiplicity).
\end{prop}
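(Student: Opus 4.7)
The plan is to construct the right-hand side of \eqref{eqtemp} as an entire function and identify it with $d$ via a Plemelj--Smithies expansion and standard necklace combinatorics.

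First, since $\mathcal{L}_b$ is quasi-nilpotent by Proposition \ref{propfastdecay}, the resolvent $(I - z\mathcal{L}_b)^{-1}$ is an entire $\mathcal{B}(\mathcal{H})$-valued function of $z \in \C$. Setting $K(z) \coloneqq z(I - z\mathcal{L}_b)^{-1}\mathcal{L}_c$, Proposition \ref{propnucpart} together with the stability of the class of nuclear operators of order $0$ under composition with bounded operators shows that $K(z)$ is nuclear of order $0$ for every $z$. Consequently, $D(z) \coloneqq \det(I - K(z))$ defines an entire function of $z$ (see \cite{Gohb}). A direct computation gives the algebraic factorization
\begin{equation*}
I - z\mathcal{L} \;=\; (I - z\mathcal{L}_b)(I - K(z)),
\end{equation*}
and since $I - z\mathcal{L}_b$ is always invertible, $I - z\mathcal{L}$ fails to be invertible on $\mathcal{H}$ precisely when $D(z) = 0$. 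By analytic Fredholm theory, the orders of these zeros match the algebraic multiplicities of the corresponding eigenvalues of $\mathcal{L}$. This yields the spectral description in the second assertion of the proposition once we prove $d = D$.

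To prove \eqref{eqtemp} for small $z$, I would apply the trace formula for Fredholm determinants of nuclear operators, namely $\log D(z) = -\sum_{n \geq 1}\frac{1}{n}\textup{tr}(K(z)^n)$, valid whenever $\n{K(z)}_{\mathcal{H} \to \mathcal{H}} < 1$ (hence for $|z|$ small). Expanding the Neumann series for $(I - z\mathcal{L}_b)^{-1}$ gives
\begin{equation*}
\textup{tr}(K(z)^n) \;=\; \sum_{k_1,\dots,k_n \geq 0} z^{n + k_1 + \cdots + k_n}\, \textup{tr}\!\bigl(\mathcal{L}_b^{k_1}\mathcal{L}_c \cdots \mathcal{L}_b^{k_n}\mathcal{L}_c\bigr),
\end{equation*}
and each trace on the right is well defined because each product contains a factor of $\mathcal{L}_c$ and is therefore nuclear. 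Using the cyclic property of the trace and the standard necklace counting (sequences over $\{b,c\}$ of total length $N$ with at least one $c$, grouped by cyclic equivalence classes), the collected coefficient of $z^N/N$ equals $\textup{tr}\!\bigl((\mathcal{L}_b + \mathcal{L}_c)^N - \mathcal{L}_b^N\bigr)$ interpreted as the sum over words with at least one $c$.

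The main obstacle is then to identify this operator trace with the periodic-orbit quantity $\textup{tr}^\flat(\mathcal{L}^N)$ from \eqref{eqdeftrace}. I would handle this by approximation: replacing $\mathcal{L}_b$ and $\mathcal{L}_c$ by the finite-rank operators $B_N$ and $A_N$ from \eqref{eqfiniterankoperators}, both traces are well defined and can be computed directly from the smooth kernels (for finite-rank smoothing approximations of a transfer operator, the operator trace is given by an Atiyah--Bott-type integral of the kernel along the diagonal, which reduces to the periodic-point sum defining $\textup{tr}^\flat$). Passing to the limit using the operator-norm convergences of Lemma \ref{lmcauchy} and the nuclear-norm control on $A_N$ yields the identification, while the all-$b$ term contributes zero because $\mathcal{L}_b$ is quasi-nilpotent (Proposition \ref{propfastdecay} combined with Lidskii). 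We obtain $\log D(z) = -\sum_{N \geq 1} \frac{z^N}{N}\textup{tr}^\flat(\mathcal{L}^N) = \log d(z)$ for $|z|$ small, proving \eqref{eqtemp}. Since $D$ is entire, this also furnishes the holomorphic extension of $d$ to $\C$, and the spectral claim follows from the factorization established above.
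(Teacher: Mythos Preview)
Your overall architecture is sound and lands on the same key computation as the paper, but the organization differs and one step is left too vague.

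\textbf{Where you diverge from the paper.} The paper does not expand $\log D(z)$ and invoke necklace combinatorics. Instead it approximates only the $b$-part: since $B_N$ is finite rank and nilpotent, $\det\bigl(I-z(B_N+\mathcal{L}_c)\bigr)=\det\bigl(I-z(I-zB_N)^{-1}\mathcal{L}_c\bigr)$, and this converges locally uniformly to $D(z)$ as $N\to\infty$. Coefficients are then extracted by Cauchy's formula, giving $a_n=\lim_N\textup{tr}\bigl((B_N+\mathcal{L}_c)^n\bigr)=\textup{tr}(\mathcal{L}^n-\mathcal{L}_b^n)$ directly, without any word counting. Your necklace argument reaches the same expression $\textup{tr}(\mathcal{L}^N-\mathcal{L}_b^N)$ and is a legitimate alternative; it is the route taken in several references on the nuclear power decomposition. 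For the multiplicity statement, the paper argues by Rouch\'e applied to $\det\bigl(I-z(B_N+\mathcal{L}_c)\bigr)$ together with Kato's continuity of finite systems of eigenvalues, rather than invoking analytic Fredholm theory for the family $I-K(z)$. Your factorization $I-z\mathcal{L}=(I-z\mathcal{L}_b)(I-K(z))$ is cleaner in spirit, but matching the order of a zero of $\det(I-K(z))$ with the algebraic multiplicity of an eigenvalue of $\mathcal{L}$ is a statement about characteristic values of analytic Fredholm families that deserves an explicit reference; it is not the one-line consequence you suggest.

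\textbf{The genuine soft spot.} The identification $\textup{tr}(\mathcal{L}^N-\mathcal{L}_b^N)=\textup{tr}^\flat(\mathcal{L}^N)$ is the heart of the proof, and your treatment of it is the weakest part. Two issues. First, the sentence ``the all-$b$ term contributes zero because $\mathcal{L}_b$ is quasi-nilpotent (Proposition~\ref{propfastdecay} combined with Lidskii)'' is misleading: $\mathcal{L}_b^N$ is not trace class, so Lidskii says nothing about it, and in any case you have already subtracted $\mathcal{L}_b^N$. What actually makes the $b$-part disappear is the \emph{strict upper-triangularity} of $\mathcal{L}_b$ in the Paley--Littlewood decomposition: $\pi_m\mathcal{L}_b^N\pi_m=0$ for every $m$, so $\langle \mathcal{L}_b^N e_k,e_k\rangle_{L^2}=0$ for every $k$, whence $\textup{tr}(\mathcal{L}^N-\mathcal{L}_b^N)=\sum_{k\in\Z}\langle \mathcal{L}^N e_k,e_k\rangle_{L^2}$. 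Second, the phrase ``Atiyah--Bott-type integral of the kernel along the diagonal'' hides the actual work. For the Fourier truncations $A_M+B_M=P_M\mathcal{L}P_M$, the trace of the $N$th power is \emph{not} the periodic-point sum; one must compute $\sum_k\langle \mathcal{L}^N e_k,e_k\rangle_{L^2}=\sum_k\int_{\mathbb{S}^1}e^{2i\pi k(x-T^N x)}\,\mathrm{d}x$, recognize the partial sums as integrals against the Dirichlet kernel, and then evaluate the limit at the (transversal) fixed points of $T^N$. The paper carries this out explicitly. Your sketch would become a proof once you replace the Atiyah--Bott allusion by this concrete computation (or cite a flat-trace theorem that covers it).
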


Notice in particular that this proposition implies that the spectrum of $\mathcal{L}$ acting on $\mathcal{H}$ coincides with the Ruelle spectrum of $\mathcal{L}$ defined in the first part of \cite{Bal2} (the inverse of the zeroes of $d$ are the Ruelle resonances according to \cite[Theorem 3.3]{Bal2}).

\begin{proof}[Proof of Proposition \ref{propfact}]
If $N \in \N$ then the operators $B_N$ and $\mathcal{L}_c$ are trace class (recall that $B_N$ is defined by \eqref{eqfiniterankoperators}). Moreover, $B_N$ is nilpotent and thus
\begin{equation}\label{eq:convergence_determinant}
\begin{split}
\det\p{I - z \p{B_N + \mathcal{L}_c}} & = \det\p{I - z B_N} \det \p{I - z \p{I - z B_N}^{-1} \mathcal{L}_c} \\ & = \det \p{I - z \p{I - z B_N}^{-1} \mathcal{L}_c} \\ & \underset{N \to + \infty}{\to} \det\p{I - z\p{I - z \mathcal{L}_b}^{-1} \mathcal{L}_c},
\end{split}
\end{equation}
and the convergence holds uniformly on every compact subset of $\C$. Denote by $h(z)$ the entire function on the right hand side of \eqref{eqtemp}. Since $h(0) = 1$, there is a sequence $\p{a_n}_{n \geq 1}$ of complex numbers such that for $\va{z}$ small enough we have
\begin{equation*}
\begin{split}
h(z) = \exp\p{- \sum_{n \geq 1} \frac{a_n}{n} z^n}.
\end{split}
\end{equation*}
Then applying Cauchy's formula, we find for $n \geq 1$ (and for $\epsilon$ small enough)
\begin{equation*}
\begin{split}
a_n = - \frac{1}{2 i \pi} \int_{\partial \mathbb{D}\p{0,\epsilon}} \frac{h'(z)}{h(z)} \frac{\mathrm{d}z}{z^n} = \lim_{N \to + \infty} \textrm{tr}\p{\p{B_N + \mathcal{L}_c}^n}.
\end{split}
\end{equation*}
Then notice that, since $B_N$ is nilpotent we have 
\begin{equation*}
\textup{tr}\p{(B_N + \mathcal{L}_c)^n} = \textup{tr}\p{(B_N + \mathcal{L}_c)^n - B_N^n}
\end{equation*}
but the operators $(B_N + \mathcal{L}_c)^n - B_N^n$ converge in trace class topology to the operator $\mathcal{L}^n - \mathcal{L}_b^n$. Thus we have
\begin{equation}\label{eqcalcultrace}
\begin{split}
a_n & = \textup{tr}\p{\mathcal{L}^n - \mathcal{L}_b^n} = \sum_{k \in \Z} \langle \p{\mathcal{L}^n - \mathcal{L}_b^n} e_k, e_k \rangle_{L^2}. 
\end{split}
\end{equation}
Now, notice that if $k \in \Z$ and $n \in \N^*$ we have $\langle \mathcal{L}_b^n e_k, e_k\rangle_{L^2} = 0$. Indeed, if $k \in \Z^*$ is such that $\theta^{m-1} \leq \va{k} < \theta^m$, then for every $N \in \N$, the image of $e_k$ by $B_N^n$ belongs to the span of the $e_{\ell}$'s such that $\va{\ell} < \theta^{m-n}$. In particular, $B_N^n e_k$ is orthogonal to $e_k$. Since, we also have $B_N^n e_0 = 0$, we find that, for every $k \in \Z$, we have $\langle B_N^n e_k, e_k\rangle_{L^2} = 0$, and, letting $N$ tends to infinity, that $\langle \mathcal{L}_b^n e_k, e_k\rangle_{L^2} = 0$. Hence, \eqref{eqcalcultrace} gives
\begin{equation}\label{eqcalcultrace2}
\begin{split}
    a_n & = \sum_{k \in \Z} \langle \mathcal{L}^n e_k, e_k \rangle_{L^2}  = \sum_{k \in \Z} \int_{\mathbb{S}^1} e^{2ik\pi \p{x - T^n(x)}} \mathrm{d}x \\
        & = \lim_{m \to + \infty} \int_{\mathbb{S}^1} \frac{\sin\p{(2m+1) \pi (x - T^n(x))}}{\sin\p{\pi\p{x-T^n(x)}}} \mathrm{d}x.
\end{split}
\end{equation}
Finally, we use a partition of unity in the last integral and locally we perform the change of variable ``$u= x -T^n(x)$''. Then, we recognize the Dirichlet kernel and find $a_n = \textup{tr}^\flat\p{\mathcal{L}^n}$.

The fact that the zeros of $d$ are exactly the inverses of the non-zero eigenvalues of $\mathcal{L}$ counted with multiplicity follows from \cite[Theorem 3.1]{kur}. However, notice that in our case the situation is simpler than in the general theory of Weinstein--Aronszajn determinant, and that the correspondence between the zeros of $d$ and the inverses of the non-zero eigenvalues of $\mathcal{L}$ may be deduced from the convergence \eqref{eq:convergence_determinant}.
\end{proof}

In some cases, it may happen that $\mathcal{L}$ acting on $\mathcal{H}$ is trace class, or in some Schatten class. In these cases, we may simplify Proposition \ref{propfact} in the following way.

\begin{prop}\label{propordre}
Assume that there is $p > 0$ such that $\mathcal{L}$ acting on $\mathcal{H}$ is in the Schatten class $\mathcal{S}_p$. Then, if $m$ denotes the smallest integer larger than $p$, we have
\begin{equation}\label{eqdetm}
\begin{split}
d(z) = \textup{det}_m\p{I - z \mathcal{L}} \exp\p{- \sum_{n=1}^{m-1} \frac{\textup{tr}^\flat\p{\mathcal{L}^n}}{n} z^n},
\end{split}
\end{equation}
where the $\textup{tr}^\flat \p{\mathcal{L}^n}$ are defined by \eqref{eqdeftrace} and $\textup{det}_m$ denotes the regularized determinant of order $m$ defined in \cite[IX]{Gohb} (this is the usual Fredholm determinant when $m=1$). In particular, the order of $d$ is less than $p$.
\end{prop}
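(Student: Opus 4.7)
The plan is to derive \eqref{eqdetm} from the power series defining $d$ in \eqref{eqdefdet} by identifying the ``tail'' $\sum_{n \geq m} \textup{tr}^\flat\p{\mathcal{L}^n} z^n / n$ with $-\log \det_m\p{I - z \mathcal{L}}$. The key observation is that since $\mathcal{L} \in \mathcal{S}_p$ and $n \geq m > p$, the power $\mathcal{L}^n$ belongs to $\mathcal{S}_{p/n} \subset \mathcal{S}_1$ (by the Hölder inequality for Schatten classes) and is therefore trace class. Moreover, by Proposition \ref{propnucpart}, $\mathcal{L}_c$ is nuclear of order $0$, so $\mathcal{L}_c \in \mathcal{S}_p$; hence $\mathcal{L}_b = \mathcal{L} - \mathcal{L}_c \in \mathcal{S}_p$ as well, and $\mathcal{L}_b^n$ is trace class for $n \geq m$.

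The first main step is to show that $\textup{tr}^\flat\p{\mathcal{L}^n} = \textup{tr}\p{\mathcal{L}^n}$ for $n \geq m$. The computation \eqref{eqcalcultrace} in the proof of Proposition \ref{propfact} already gives the identity $\textup{tr}^\flat\p{\mathcal{L}^n} = \textup{tr}\p{\mathcal{L}^n - \mathcal{L}_b^n}$, so it suffices to check that $\textup{tr}\p{\mathcal{L}_b^n} = 0$. But by Proposition \ref{propfastdecay}, $\mathcal{L}_b$ is quasi-nilpotent, so all its eigenvalues vanish; since $\mathcal{L}_b^n$ is trace class, Lidskii's theorem then yields $\textup{tr}\p{\mathcal{L}_b^n} = 0$.

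The second step invokes the classical formula for the regularized determinant: for $A \in \mathcal{S}_m$ and $\va{z}$ small enough,
\begin{equation*}
\begin{split}
\log \det_m\p{I - z A} = - \sum_{n \geq m} \frac{\textup{tr}\p{A^n}}{n} z^n,
\end{split}
\end{equation*}
which follows from Lidskii's theorem applied to the Weierstrass-style product defining $\det_m$ (see \cite[IX]{Gohb}). Splitting the series defining $d(z)$ at $n = m$ and inserting the trace identification from the first step yields \eqref{eqdetm} for $\va{z}$ small. Both sides are entire (the left side by Proposition \ref{propfact}, the right side because $\det_m\p{I - z \mathcal{L}}$ is entire for $\mathcal{L} \in \mathcal{S}_p \subset \mathcal{S}_m$ and the exponential factor is entire), so the identity extends to all of $\C$ by analytic continuation.

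For the order bound, recall the standard estimate (again \cite[IX]{Gohb}) that $\det_m\p{I - z \mathcal{L}}$ has order at most $p$ when $\mathcal{L} \in \mathcal{S}_p$, while the exponential factor is the exponential of a polynomial of degree $m - 1 \leq p$ and thus has order at most $p$; hence their product has order at most $p$. The main obstacle is really the clean identification $\textup{tr}^\flat\p{\mathcal{L}^n} = \textup{tr}\p{\mathcal{L}^n}$ for $n \geq m$, which requires both the nuclear power decomposition already available from Section \ref{secnpd} and the appeal to Lidskii's theorem for $\mathcal{L}_b^n$; once this is in place, \eqref{eqdetm} and the order bound both follow from well-known properties of regularized determinants.
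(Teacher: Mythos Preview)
Your proof is correct and follows essentially the same approach as the paper: both reduce to the identity $\textup{tr}(\mathcal{L}^n) = \textup{tr}^\flat(\mathcal{L}^n)$ for $n \geq m$ via the computation \eqref{eqcalcultrace}, then invoke the power-series expansion of $\det_m$ and analytic continuation. The only minor differences are that the paper expands $\textup{tr}(\mathcal{L}^n)$ directly in the rescaled basis $(e_k)$ rather than going through $\textup{tr}(\mathcal{L}_b^n)=0$ via Lidskii as you do, and that for the order bound the paper makes the argument explicit by writing $\det_m(I-z\mathcal{L})$ as a Weierstrass product (via Lidskii) and appealing to \cite[Theorem~2.6.5]{Boas}, rather than citing a ready-made estimate from \cite[IX]{Gohb}.
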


\begin{proof}
We know that when $\va{z}$ is small enough we have
\begin{equation*}
\begin{split}
\textup{det}_m\p{I - z \mathcal{L}} = \exp\p{- \sum_{n \geq m} \frac{\textup{tr}\p{\mathcal{L}^n}}{n}z^n}.
\end{split}
\end{equation*}
Then, the same computation \eqref{eqcalcultrace}-\eqref{eqcalcultrace2} as in the proof of Proposition \ref{propfact} ensures that for $n \geq m$ we have
\begin{equation}
\begin{split}
\textup{tr}\p{\mathcal{L}^n} = \sum_{k \in \Z} \langle \mathcal{L}^n e_k, e_k \rangle_{L^2} = \textup{tr}^\flat\p{\mathcal{L}^n},
\end{split}
\end{equation}
and \eqref{eqdetm} follows. To see that $d$ has order less than $p$, recall that the since $\mathcal{L}$ belongs to the Schatten class $\mathcal{S}_p$, its eigenvalues are $p$-summable, then apply Lidskii's Trace Theorem to recognize that $\textup{det}_m\p{I - z \mathcal{L}}$ is a Weierstrass product and hence has order less than $p$ thanks to \cite[Theorem 2.6.5]{Boas}.
\end{proof}

\begin{rmq}
If the sequence $M$ satisfies \eqref{eqgentil} for some $\gamma > 0$, then the estimates \eqref{eqpoly} and \eqref{eqboundsing}, respectively from Lemma \ref{lmpropw} and Theorem \ref{thm1quant} imply that the singular values of $\mathcal{L}$ acting on $\mathcal{H}$ satisfy
\begin{equation}
\begin{split}
\sigma_k \underset{k \to + \infty}{=} \mathcal{O}\p{\frac{1}{k^\delta}},
\end{split}
\end{equation}
where $\delta = \frac{\log \theta}{\log \gamma}$. Hence, $\mathcal{L}$ acting on $\mathcal{H}$ belongs to the Schatten class $\mathcal{S}_p$ for any $p > \delta^{-1}$ and Proposition \ref{propordre} implies that the order of $d$ is less than $\frac{\log \gamma}{\log \theta}$. Since $\theta$ may be chosen arbitrarily close to the expanding constant $\lambda$, the following result follows.
\end{rmq}

\begin{cor}\label{corordre}
If there is $\gamma > 0$ such that the sequence $M$ satisfies \eqref{eqgentil}, then the order of the dynamical determinant $d$ is less than $\frac{\log \gamma}{\log \lambda}$.
\end{cor}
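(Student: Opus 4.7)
The plan is to formalise the argument sketched in the remark preceding the statement: combine the singular-value bound of Theorem \ref{thm1quant} with the polynomial decay estimate \eqref{eqpoly} from Lemma \ref{lmpropw} to exhibit $\mathcal{L}$ acting on $\mathcal{H}$ as a Schatten class operator, and then invoke Proposition \ref{propordre}. The crucial underlying observation is that the dynamical determinant $d$ is defined intrinsically in terms of $T$ through \eqref{eqdefdet} and \eqref{eqdeftrace}; in particular it does not depend on the parameter $\theta$ used to build $\mathcal{H} = \mathcal{H}_{\theta,R,M}$, so any bound on the order of $d$ that is valid for every $\theta \in \left]1,\lambda\right[$ may be optimised over $\theta$.

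First I would fix $\theta \in \left]1,\lambda\right[$ and apply Theorem \ref{thm1quant} to obtain constants $C, A > 0$ and a space $\mathcal{H}$ on which the singular values $\p{\sigma_k}_{k \in \N}$ of $\mathcal{L}$ satisfy $\sigma_k \leq C \sup_{0 < x \leq 1/k} w(Ax)/w(\theta A x)$. Hypothesis \eqref{eqgentil} together with Lemma \ref{lmpropw}, applied with $\mu = 1/\theta \in \left]0,1\right[$, then bounds the quotient inside the supremum by $C'(\theta A x)^{\delta}$ for $x$ small enough, where $\delta = \log \theta/\log \gamma$. For $k$ large the whole range $0 < x \leq 1/k$ lies in the domain of validity of \eqref{eqpoly}, so $\sigma_k = \mathcal{O}(k^{-\delta})$. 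In particular $\mathcal{L}$ acting on $\mathcal{H}$ belongs to the Schatten class $\mathcal{S}_p$ for every $p > \delta^{-1} = \log \gamma/\log \theta$.

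I would then apply Proposition \ref{propordre} to any such $p$ to conclude that the order of $d$ is at most $p$. Letting $p \searrow \log \gamma/\log \theta$ gives an order bound of $\log \gamma / \log \theta$. Because $d$ is intrinsic to $T$ this bound does not rely in any way on the auxiliary choice of $\theta$: the argument is valid for every $\theta \in \left]1,\lambda\right[$, so one may take the infimum of $\log \gamma/\log \theta$ over that interval, which is $\log \gamma/\log \lambda$, attained in the limit $\theta \nearrow \lambda$. This yields the stated bound.

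Since every ingredient is already in place, I do not expect a serious obstacle. The only mild point to check is the match between the range $0 < x \leq 1/k$ appearing in \eqref{eqboundsing} and the small-$x$ region where \eqref{eqpoly} applies; this is automatic once $k$ is large enough, and the finitely many initial indices can be absorbed into the multiplicative constant without affecting the asymptotic rate.
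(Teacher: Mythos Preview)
Your proposal is correct and follows exactly the route taken in the paper: bound the singular values via Theorem \ref{thm1quant} and \eqref{eqpoly} to place $\mathcal{L}$ in $\mathcal{S}_p$ for every $p>\log\gamma/\log\theta$, apply Proposition \ref{propordre}, and then let $\theta\nearrow\lambda$. One small remark: Lemma \ref{lmpropw} states \eqref{eqpoly} for all $x>0$, so the caveat about the ``small-$x$ region'' is not actually needed.
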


\begin{rmq}\label{rmqordre}
Notice that \eqref{eqgentil} implies that taking a derivative in the class $\mathcal{C}^M$ results in replacing $R$ by $\gamma R$ in \eqref{eqdefdc}. Composing by a contraction of factor $\lambda^{-1}$ (which is basically what $\mathcal{L}$ does) results in replacing $R$ by $\lambda^{-1} R$. Thus $\mathcal{L}$ has morally the same regularizing effect in the class $\mathcal{C}^M$ as taking $\frac{\log \gamma}{\log \lambda}$ primitives (notice that this number is not necessarily an integer). Consequently, the decay that we obtain on the singular values of $\mathcal{L}$, and ultimately the bound on the order of the dynamical determinant, is natural (considering for instance the case of Sobolev injections).
\end{rmq}

Finally, we will use the nuclear power decomposition \eqref{eqnpd} with Propositions \ref{propfastdecay}, \ref{propnucpart} and \ref{propfact} in order to bound the growth of the dynamical determinant $d$. To do so, define the entire functions $F$ and $G$ by
\begin{equation}\label{eqdefFetG}
\begin{split}
F(z) = \p{1 +z}\prod_{m = 1}^{+ \infty} \p{1 + f(m)z} \textrm{ and } G(z) = \sum_{n = 0}^{+ \infty} \p{\prod_{k=0}^{n-1} g(k)} z^n,
\end{split}
\end{equation}
where we recall that $f$ and $g$ have been defined respectively in \eqref{eqdeff} and \eqref{eqdefg}. Notice that $F$ has genus zero. Thus if $n(r)$ denotes the number of integers $m$ such that $f(m)^{-1} \leq r$ or $m =0$, we have the following estimate \cite[Lemma 3.5.1]{Boas} for $r > 0$:
\begin{equation}\label{eqwk}
\begin{split}
\log F(r) \leq \int_0^r \frac{n(s)}{s} \mathrm{d}s + r \int_r^{+ \infty} \frac{n(s)}{s^2} \mathrm{d}s.
\end{split}
\end{equation}
This bound may be used with Proposition \ref{propbound} to control the growth of the dynamical determinant, see \S \ref{secex} for examples.

\begin{prop}\label{propbound}
There is a constant $C$ such that for all $z \in \C$ we have
\begin{equation*}
\begin{split}
\va{d(z)} \leq F\p{C \va{z} G\p{C\va{z}}}.
\end{split}
\end{equation*}
\end{prop}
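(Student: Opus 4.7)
The plan is to use the factorization provided by Proposition~\ref{propfact}, namely $d(z) = \det\p{I - z\p{I - z\mathcal{L}_b}^{-1} \mathcal{L}_c}$, and then control the Fredholm determinant by a product formula applied to the singular values of the operator inside. The two ingredients we already have are an operator-norm control of $(I - z\mathcal{L}_b)^{-1}$ via Proposition~\ref{propfastdecay}, and a singular-value control of $\mathcal{L}_c$ via Proposition~\ref{propnucpart}.

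First, I would use the Neumann series. Since by Proposition~\ref{propfastdecay} we have $\n{\mathcal{L}_b^n}_{\mathcal{H}\to\mathcal{H}} \leq C^n \prod_{k=0}^{n-1} g(k)$, the series $\sum_{n\geq 0} z^n \mathcal{L}_b^n$ converges in operator norm for every $z$ (because $g(k)\to 0$), and
\begin{equation*}
\n{\p{I - z\mathcal{L}_b}^{-1}}_{\mathcal{H}\to\mathcal{H}} \leq \sum_{n=0}^{+\infty} \p{C\va{z}}^n \prod_{k=0}^{n-1} g(k) = G\p{C\va{z}}.
\end{equation*}
In particular, $I - z\mathcal{L}_b$ is invertible on $\mathcal{H}$ for every $z \in \C$, which is already needed to make sense of the right-hand side of \eqref{eqtemp} as an entire function.

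Next, I would combine this with the singular value bound $s_\ell(\mathcal{L}_c) \leq C f(\ell)$ from Proposition~\ref{propnucpart}, using the standard inequality $s_\ell(AB) \leq \n{A} s_\ell(B)$. This gives, for the operator $K_z \coloneqq z\p{I - z \mathcal{L}_b}^{-1} \mathcal{L}_c$,
\begin{equation*}
s_\ell(K_z) \leq \va{z} \n{\p{I - z\mathcal{L}_b}^{-1}} s_\ell(\mathcal{L}_c) \leq C \va{z} G\p{C\va{z}} f(\ell).
\end{equation*}
In particular $K_z$ is nuclear of order $0$, hence trace class, so the Fredholm determinant $\det(I - K_z)$ is well-defined and admits the classical majorant
\begin{equation*}
\va{\det(I - K_z)} \leq \prod_{\ell \geq 1} \p{1 + s_\ell(K_z)}
\end{equation*}
(see \cite[Theorem VII.3.3]{Gohb} or any standard reference on Fredholm determinants).

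Putting the pieces together,
\begin{equation*}
\va{d(z)} \leq \prod_{\ell \geq 1} \p{1 + C \va{z} G\p{C\va{z}} f(\ell)} \leq \prod_{m \geq 0} \p{1 + f(m) \cdot C \va{z} G\p{C\va{z}}} = F\p{C\va{z} G\p{C\va{z}}},
\end{equation*}
after slightly enlarging $C$ to absorb the factor coming from the $m=0$ term (since $f(0)$ is finite, this only changes the constant). This is precisely the claimed bound. I do not anticipate any serious obstacle: the argument is essentially a bookkeeping of the two estimates already proved, combined with the determinant--singular value inequality; the only subtlety is to check that the product on the right lines up with the definition of $F$, which is immediate once one notes that $F$ is defined as an infinite product over $m \in \N$ of $(1 + f(m) \cdot \text{argument})$.
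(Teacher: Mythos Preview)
Your argument is correct and follows essentially the same route as the paper: bound $\n{(I-z\mathcal{L}_b)^{-1}}$ by $G(C\va{z})$ via the Neumann series and Proposition~\ref{propfastdecay}, bound the singular values of $K_z$ by $C\va{z}G(C\va{z})f(\ell)$ using Proposition~\ref{propnucpart}, and then apply the determinant--singular value inequality $\va{\det(I-K)}\leq\prod(1+s_k(K))$. The only cosmetic difference is that the paper derives this last inequality by combining Lidskii's theorem with Weyl's inequality \cite[Theorem IV.3.1]{Gohb}, whereas you cite it directly; also be careful that the singular-value product should include the index $0$ (the bound $s_\ell\leq Cf(\ell)$ in Proposition~\ref{propnucpart} is stated for $\ell\geq 1$, but $s_0\leq\n{K_z}$ is handled by the $m=0$ factor of $F$ after enlarging $C$, exactly as you indicate).
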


\begin{proof}
Let $z \in \C$. Denote by $\p{c_k}_{k \in \N}$ the sequence of singular values of the operator $-z\p{I - z\mathcal{L}_b}^{-1} \mathcal{L}_c$ and by $\p{\lambda_k}_{k \in \N}$ the sequence of its eigenvalues. By Lidskii's Theorem we have
\begin{equation*}
\begin{split}
\va{d(z)} = \va{\prod_{k \in \N} \p{1 + \lambda_k}} \leq 1 + \sum_{n \geq 1} \sum_{k_1 < \dots < k_n} \prod_{j=1}^n \va{\lambda_{k_j}}.
\end{split}
\end{equation*}
Then applying \cite[Theorem IV.3.1]{Gohb} we see that 
\begin{equation*}
\begin{split}
\va{d(z)} \leq 1 + \sum_{n \geq 1} \sum_{k_1 < \dots < k_n} \prod_{j=1}^n c_{k_j} = \prod_{k \geq 0} \p{1 + c_{k}}.
\end{split}
\end{equation*}
Now if $\p{s_k}_{k \geq 0}$ denotes the sequence of singular values of $\mathcal{L}_c$ then we have for $k \geq 1$ (replace $f(k)$ by $1$ in the case $k = 0$)
\begin{equation*}
\begin{split}
c_k \leq \va{z} \n{\p{I - z \mathcal{L}_b}^{-1}}_{\mathcal{H} \to \mathcal{H}} s_k \leq C \va{z} \n{\p{I - z \mathcal{L}_b}^{-1}}_{\mathcal{H} \to \mathcal{H}} f(k),
\end{split}
\end{equation*}
for some constant $C > 0$, and thus $\va{d(z)} \leq F\p{C \va{z} \n{\p{I - z \mathcal{L}_b}^{-1}}_{\mathcal{H} \to \mathcal{H}}}$. But from Lemma \ref{propfastdecay}, we get that, up to taking larger $C$, we have 
\begin{equation*}
\n{\p{I - z \mathcal{L}_b}^{-1}}_{\mathcal{H} \to \mathcal{H}} \leq G\p{C \va{z}},
\end{equation*}
which ends the proof of the proposition.
\end{proof}

\begin{rmq}\label{rmqgenuszero}
Assume that the right hand side of \eqref{eqboundsing} in Theorem \ref{thm1quant} is summable. Then we know that $\mathcal{L}$ acting on $\mathcal{H}$ is trace class and Proposition \ref{propordre} implies that the order of the dynamical determinant $d$ is less than $1$. In particular, $d$ has genus zero, and when the right hand side of \eqref{eqboundsing} decays very fast we may want to get a better bound on $d$. To do so, we may work as in the proof of Proposition \ref{propbound} to find that there is a constant $C > 0$ such that for every $z \in \C$ we have
\begin{equation}\label{eqbornedgenuszero}
\begin{split}
\va{d(z)} \leq \p{1 + C\va{z}} \prod_{k \in \N^*} \p{1 + C \va{z}  \sup_{x \leq \frac{1}{k}} \frac{w\p{Ax}}{w\p{\theta A x}}}.
\end{split}
\end{equation}
The infinite product in the right hand side of \eqref{eqbornedgenuszero} may be bounded using \cite[Lemma 3.5.1]{Boas} as we did for $F$: just replace $n(s)$ by the number of integer $k$ such that $\sup_{x \leq \frac{1}{k}} \frac{w\p{Ax}}{w\p{\theta A x}} \geq s^{-1}$ in \eqref{eqwk} or $k=0$.
\end{rmq}

\begin{rmq}
Notice that, using Jensen's formula \cite[1.2.1 p.2]{Boas}, a bound on the growth of the dynamical determinant immediately gives an upper bound on the asymptotics of the number of Ruelle resonances outside of $\mathbb{D}(0,\epsilon)$, when $\epsilon$ tends to $0$.
\end{rmq}

\section{Examples}\label{secex}

\subsection{Gevrey and analytic dynamics}\label{subsecga}

In this section we take $M_k = k!^{\sigma - 1}$ for some $\sigma \geq 1$. For $\sigma = 1$, the class $\mathcal{C}^M$ is the class of real-analytic functions. For $\sigma >1$, this is by definition the class of $\sigma$-Gevrey functions. We still denote by $T$ an expanding map of the circle with expanding factor at least $\lambda > 1$, and we assume that $T'$ is $\sigma$-Gevrey. In this case, we see that for every $\gamma > 1$, we can find $C > 0$ such that for all $k \in \N$ the estimate \eqref{eqgentil} holds. Thus the dynamical determinant $d$ has order $0$. But we can of course get a better bound.

To do so, recall the function $k$ from the proof of Lemma \ref{lmpropw}. Its definition implies that if $x > 0$ then
\begin{equation*}
\begin{split}
x^{k(x)} k(x)!^\sigma < x^{k(x) +1} \p{k(x) + 1}!^\sigma
\end{split}
\end{equation*}
and thus
\begin{equation*}
\begin{split}
k(x) > x^{-\frac{1}{\sigma}} - 1.
\end{split}
\end{equation*}

Then if $\theta \in \left]0,\lambda\right[$ and $A > 0$ is the constant from Theorem \ref{thm1quant}, we have when $m \geq 1$
\begin{equation}\label{eqestimeeGevrey}
\begin{split}
\sigma_m \leq C\sup_{0 < x \leq \frac{1}{m}} \frac{w(Ax)}{w(\theta A x)} \leq \sup_{0 < x \leq \frac{1}{m}} \p{\frac{1}{\theta}}^{k(\theta A x)} \leq c \exp\p{-c^{-1} m^{\frac{1}{\sigma}}}
\end{split}
\end{equation}
for some constant $c > 0$. Thus, by \cite[Lemma 1.13]{lagtf} (or Remark \ref{rmqgenuszero}), we have that for some constant $c > 0$ we have
\begin{equation*}
\begin{split}
\va{d(z)} \leq c \exp\p{c \p{\log_+ \va{z}}^{1 + \sigma}}.
\end{split}
\end{equation*}
Notice that we retrieve the optimal result \cite{bandtlowLowerBoundsRuelle2019} when $\sigma = 1$, which is also the result of Ruelle \cite{Ruelle1}. When $\sigma > 1$, we get a better result than \cite{lagtf}.  This is because our space is more carefully designed, in particular our estimates in Lemma \ref{lmfonda} is sharper than the one from \cite[Lemma 6.7]{lagtf}. It is likely that we could use the techniques presented here to achieve similar bounds in the context of \cite{lagtf}. In particular, using Paley--Littlewood decomposition with annuli of polynomial size may not be such a good idea. It seems easier to use a Paley--Littlewood decomposition with annuli of exponential size, with a ratio adapted to the hyperbolicity of our map (as we did here). Maybe, it would also be wise to use the characterization of singular values as approximation numbers. However, the geometrical context of \cite{lagtf} being more intricate, there are many technical points to check, and this would certainly result in a cumbersome proof (in particular, dealing with the transition from the stable direction to the unstable one requires some care).

\begin{rmq}
The bound \eqref{eqestimeeGevrey} on the singular values of $\mathcal{L}$ acting on $\mathcal{H}$ implies that the transfer operator $\mathcal{L}$ belongs to the exponential class of type $(c^{-1},\sigma^{-1})$ defined in \cite{bandtlowResolventEstimatesOperators2008}. Hence, we may apply the results from \cite{bandtlowResolventEstimatesOperators2008} to transfer operators associated to Gevrey expanding maps of the circle. For instance, the resolvent estimates \cite[Theorem 3.13]{bandtlowResolventEstimatesOperators2008} may be used to derive a better (super-exponential) remainder in the asymptotics expansion for the correlations of Gevrey observables (see \cite[Theorem 1.2]{GouLiv2} for the usual asymptotics of correlation in the case of hyperbolic diffeomorphisms). We could probably also use \cite[Theorem 4.2]{bandtlowResolventEstimatesOperators2008} to control globally the Ruelle spectrum of a perturbation of $T$ in the Gevrey category.
\end{rmq}

\subsection{The class $\mathcal{C}^{\alpha,\beta}$}

We investigate now the classes that we used in \cite{jezequelGlobalTraceFormula2019} (where they were called $\mathcal{C}^{\kappa,\upsilon}$). We use here a slightly different convention. We choose $\alpha > 0$ and $\beta \geq 1$ and take $M_k = \exp\p{\frac{\alpha k^\beta}{\beta}}$. We denote by $\mathcal{C}^{\alpha,\beta}$ the class $\mathcal{C}^M$. Notice that when $\beta = 1$, we find the class of real-analytic functions (for any value of $\alpha$). Notice also that when $\beta > 2$, the class $\mathcal{C}^{\alpha,\beta}$ is not closed under differentiation. We assume now that $T'$ belongs to the class $\mathcal{C}^{\alpha,\beta}$.

Let us deal first with the case $ 1 < \beta < 2$, then we see that for $0 < x < 1$ and some constants $c$ depending on $\alpha$ we have ($k$ is still from the proof of Lemma \ref{lmpropw})
\begin{equation}\label{eqk}
\begin{split}
k(x) \geq c^{-1} \va{\log x}^{\frac{1}{\beta - 1}} - c,
\end{split}
\end{equation}
thus if $\mu \in \left]0,1\right[$, we have for some new constant $c>0$ and small $x > 0$
\begin{equation}\label{eqwmu}
\begin{split}
\frac{w(\mu x)}{w(x)} \leq c\exp\p{-c^{-1}\ \va{\log x}^{\frac{1}{\beta - 1} }}.
\end{split}
\end{equation}
Then, for some new constant $c > 0$ the estimates on the singular values of $\mathcal{L}$ from Theorem \ref{thm1quant} becomes (for $k \geq 1$)
\begin{equation*}
\begin{split}
\sigma_k \leq c \exp\p{- c^{-1} \p{\log k}^{\frac{1}{\beta - 1}}}.
\end{split}
\end{equation*}
Once again, this gives, with Remark \ref{rmqgenuszero}, that, up to taking larger $c$,
\begin{equation}\label{eqagain}
\begin{split}
\va{d(z)} \leq \prod_{k \geq 1} \p{1 + c \va{z}\exp\p{- c^{-1} \p{\log k}^{\frac{1}{\beta - 1}}}}.
\end{split}
\end{equation}
Using \cite[Lemma 3.5.1]{Boas} to bound the right hand side of \eqref{eqagain} (that is using \eqref{eqwk} with the modification described in Remark \ref{rmqgenuszero}), we get that for some new constant $c > 0$ and all $z \in \C$ we have
\begin{equation}\label{eqtjs}
\begin{split}
\log_+ \va{d(z)} \leq c \exp\p{c \p{\log_+ \va{z}}^{\beta - 1}}.
\end{split}
\end{equation}
In particular, $d$ has order zero, but this could have been seen as a consequence of Corollary \ref{corordre}.

Now, if $\beta = 2$ then we have
\begin{equation*}
\begin{split}
\lim_{k \to + \infty} \p{\frac{(k+1) M_{k+1}}{M_k}}^{\frac{1}{k}} = e^{\alpha}.
\end{split}
\end{equation*}
Thus, by Corollary \ref{corordre}, the dynamical determinant $d$ has order less than $\frac{\alpha}{\log \lambda}$. We have here a very interesting behaviour: the bound on the order of the dynamical determinant depends on the expansion factor (this implies in particular that trace formula holds for large iterates of $\mathcal{L}$, see \cite[Theorem 2.4 and Remark 2.5]{lagtf}). As pointed out in Remark \ref{rmqordre}, it is not surprising that this behaviour occurs for the value of $\beta$ that separates classes that are stable under differentation and those that are not. As far as we know, it is the first time that such a behavior is proved and, consequently, it would be particularly interesting to know wether our result is sharp or not in that case.

Finally, we deal with the case $\beta > 2$. The estimates \eqref{eqk}, and thus \eqref{eqwmu}, remain true. Thus, for some $c > 0$, we have for large $N$ (recall that $g$ is defined by \eqref{eqdefg})
\begin{equation*}
\begin{split}
g(N) \leq c \exp\p{-c^{-1} N^{\frac{1}{\beta - 1}}}
\end{split}
\end{equation*}
and thus, changing the value of $c$,
\begin{equation*}
\begin{split}
\prod_{k=0}^{N-1} g(N) \leq c \exp\p{-c^{-1} N^{1 + \frac{1}{\beta - 1}}}.
\end{split}
\end{equation*}
Then, in the definition \eqref{eqdefFetG} of $G$, we may split the sum between $n \leq \p{\frac{2 \log r}{c}}^{\beta-1}$ and $n > \p{\frac{2 \log r}{c}}^{\beta-1}$, to find that for some $c > 0$ and all $r>0$
\begin{equation*}
\begin{split}
\log_+ G(r) \leq c \p{\p{\log_+ r}^{\beta} + 1}
\end{split}
\end{equation*}
An easy computation shows that for some $c > 0$ and all $m \geq 1$ we have
\begin{equation*}
\begin{split}
f(m) \leq c \exp\p{- c^{-1} \p{\log m}^{\frac{\beta}{\beta - 1}}},
\end{split}
\end{equation*}
where $f$ has been defined by \eqref{eqdeff}. Thus reasoning as above in the case $\beta < 2$ (that is using \cite[Lemma 3.5.1]{Boas}, which has been stated as \eqref{eqwk} in this case), we find that for some $c > 0$ and $r > 0$
\begin{equation*}
\begin{split}
\log_+ F(r) \leq c \exp\p{c \p{\log_+ r}^{\frac{\beta- 1}{\beta}}}.
\end{split}
\end{equation*}
And by Proposition \ref{propbound}, we see that there is still a new constant $c > 0$ such that for all $z \in \C$ we have
\begin{equation}\label{eqtjs2}
\begin{split}
\log_+ \va{d(z)} \leq c \exp\p{c \p{\log_+ \va{z}}^{\beta - 1}}.
\end{split}
\end{equation}
Notice that this is the same estimate than \eqref{eqtjs} that we established in the case $\beta < 2$, and that it is still true in the case $\beta = 2$ (but we have more precise information in this case). It is very interesting that the bound \eqref{eqtjs2} is true regardless of the value of $\beta$ while there is a huge change in the structure of the transfer operator at $\beta = 2$. Hence, it seems that in most cases the nuclear power decomposition contains all the information that we need on the dynamical determinant. This is indeed a very versatile tool that allows also to deal with finitely differentiable map \cite{Bal2}, and as we have just seen, it does not seem that we lose much information by using this method in more favorable cases. Notice however that in some \emph{very} favorable cases (such as Gevrey and analytic dynamics), the nuclear decomposition does not seem to give the best bound (this is because in this case, the bounds on the singular values of $\mathcal{L}$ and $\mathcal{L}_c$ are very similar).

\section*{Appendix: Weighted transfer operators}

It is sometimes useful to consider more general tranfer operators that the one defined by \eqref{eqdefL}. If $\psi : \mathbb{S}^1 \to \C$ is a weight we may define the weighted transfer operator $\mathcal{L}_\psi$ by
\begin{equation*}
\begin{split}
\mathcal{L}_\psi \varphi : x \mapsto \sum_{y : Ty = x} \frac{\psi(y)}{\va{T'(y)}} \varphi(y).
\end{split}
\end{equation*}

We shall assume in the following that $\psi$ is of class $\mathcal{C}^M$. It is then easy to see that the analysis above remains true for the operator $\mathcal{L}_\psi$, so that we can state:

\begin{prop}\label{propgen}
Theorem \ref{thm1quant} and Corollary \ref{corordre} remains true when $\mathcal{L}$ is replaced by $\mathcal{L}_\psi$. Moreover, we may also define in this case the decomposition \eqref{eqnpd}. This decomposition satisfies Propositions \ref{propfastdecay} and \ref{propnucpart}. Propositions \ref{propfact} and \ref{propbound} remains true as well if we replace the dynamical determinant $d$ by $d_\psi$ which is obtained from \eqref{eqdefdet} by replacing $\textup{tr}^\flat\p{\mathcal{L}^n}$ by
\begin{equation*}
\begin{split}
\textup{tr}^\flat\p{\mathcal{L}_\psi^n} = \sum_{x:T^n x =x} \frac{\prod_{k=0}^{n-1} \psi(T^k x)}{\va{1 - \p{T^n}'(x)}}.
\end{split}
\end{equation*}
\end{prop}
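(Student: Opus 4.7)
The strategy is to inspect the proofs of Theorem \ref{thm1quant}, Corollary \ref{corordre}, and Propositions \ref{propfastdecay}--\ref{propbound}, isolate the only two points where the specific form of the weight $\frac{1}{\va{T'(y)}}$ intervenes --- namely Lemma \ref{lmfonda} and the trace computation \eqref{eqcalcultrace} --- and check that $\psi \in \mathcal{C}^M$ causes no trouble at either spot. The main task is to establish the analogue of Lemma \ref{lmfonda}: there are constants $C, R > 0$ such that for all $k, \ell \in \Z$ with $\va{k} > \theta^{-1} \va{\ell}$,
\begin{equation*}
\va{\langle \mathcal{L}_\psi e_\ell, e_k \rangle_{L^2}} \leq C w\p{\frac{R}{\va{k}}}.
\end{equation*}

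The proof mirrors that of Lemma \ref{lmfonda}: after $m$ integrations by parts using periodicity, the matrix coefficient equals $(-1)^m \int_{\mathbb{S}^1} L_{a_{k,\ell}}^m(\psi)(x)\, e^{2 i \pi(\ell x - k T(x))}\,\mathrm{d}x$, so it is enough to bound $\n{L_{a_{k,\ell}}^m(\psi)}_\infty$. Writing $L_{a_{k,\ell}} = D \circ M_{a_{k,\ell}}$ and expanding by Leibniz yields
\begin{equation*}
L_{a_{k,\ell}}^m(\psi) = \sum_{n_1 + \dots + n_m + p = m} c_{n_1,\dots,n_m,p}\, \psi^{(p)} \prod_{j=1}^m a_{k,\ell}^{(n_j)}
\end{equation*}
for combinatorial coefficients $c_{n_1,\dots,n_m,p}$ independent of $\psi$ and $a_{k,\ell}$ (specializing to $\psi = 1$ recovers \eqref{eqruse}). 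Since $\psi \in \mathcal{C}^M$ we have $\va{\psi^{(p)}} \leq C R^p p! M_p$; combined with the bound on $\va{a_{k,\ell}^{(n_j)}}$ already derived in Lemma \ref{lmfonda} and the log convexity inequality $M_p \prod_j M_{n_j} \leq M_m$ (a consequence of $M_0 = 1$ and $M_a M_b \leq M_{a+b}$), the same combinatorial argument used there yields the claimed estimate, after possibly enlarging $C$ and $R$.

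With the analogue of Lemma \ref{lmfonda} at hand, Lemmas \ref{lmmixedterm} and \ref{lmcauchy}, the construction of $\mathcal{H}$, Theorem \ref{thm1quant} and Propositions \ref{propfastdecay}, \ref{propnucpart}, as well as Corollary \ref{corordre} and Proposition \ref{propordre}, all carry over verbatim, since they only use $\mathcal{L}$ through this coefficient bound and otherwise rely on purely functional-analytic facts. For Propositions \ref{propfact} and \ref{propbound} the only additional ingredient is the identification of $a_n = \textup{tr}(\mathcal{L}_\psi^n - (\mathcal{L}_\psi)_b^n)$ with the $\textup{tr}^\flat(\mathcal{L}_\psi^n)$ defined in the statement. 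Repeating the calculation \eqref{eqcalcultrace} with the weight carried through the iterates of $T$, the change of variables $x = T^n(y)$ in $\sum_k \langle \mathcal{L}_\psi^n e_k, e_k \rangle_{L^2}$ gives
\begin{equation*}
a_n = \lim_{m \to + \infty} \int_{\mathbb{S}^1} \prod_{j=0}^{n-1} \psi(T^j y) \cdot \frac{\sin\p{(2m+1)\pi(y-T^n(y))}}{\sin\p{\pi(y-T^n(y))}} \mathrm{d}y,
\end{equation*}
and a partition of unity together with the local change of variable $u = y - T^n(y)$ and the Dirichlet kernel identity produces the expected $\sum_{x: T^n x = x} \frac{\prod_{k=0}^{n-1}\psi(T^k x)}{\va{1 - (T^n)'(x)}}$.

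The main obstacle is the careful bookkeeping in the analogue of Lemma \ref{lmfonda}: writing $L_{a_{k,\ell}}^m(\psi)$ as the indicated Leibniz expansion and checking that the log convexity of $M$ still absorbs the derivatives of $\psi$ into the factor $M_m$ without compromising the combinatorial count inherited from \eqref{eqruse}. Everything downstream is a transparent substitution.
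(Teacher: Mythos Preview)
Your proposal is correct and follows essentially the same approach as the paper: both identify Lemma \ref{lmfonda} and the trace computation \eqref{eqcalcultrace} as the only places where the specific weight matters, and both extend Lemma \ref{lmfonda} via the Leibniz-type expansion $L_{a_{k,\ell}}^m(\psi) = \sum_{n_1+\dots+n_m+p=m} c_{n_1,\dots,n_m,p}\,\psi^{(p)}\prod_j a_{k,\ell}^{(n_j)}$ combined with $\psi\in\mathcal{C}^M$ and log convexity. The one point where the paper is more explicit than your ``same combinatorial argument'' is the evaluation of $\sum c_{n_1,\dots,n_m,p}\,p!\prod_j n_j!$: the paper substitutes $\psi = a$ (not $\psi=1$) in \eqref{eqformule} to recognize this sum as $L_a^m(a)(0) = 2^m m!$, which grows at most like $C^m m!$ as required.
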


To prove Proposition \ref{propgen}, notice that the actual definition of $\mathcal{L}$ was only used in the proofs of Lemma \ref{lmfonda} and Proposition \ref{propfact} in the analysis above. The computation that gave Proposition \ref{propfact} can still be carried out and will give the formula that we announced for the flat trace of the weighted transfer operator. Thus we shall only explain how we can replace the operator $\mathcal{L}$ by $\mathcal{L}_\psi$ in the proof of Lemma \ref{lmfonda}.

\begin{lm}
Lemma \ref{lmfonda} remains true when $\mathcal{L}$ is replaced by $\mathcal{L}_\psi$.
\end{lm}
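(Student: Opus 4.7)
The plan is to adapt the proof of Lemma \ref{lmfonda} by carrying along the factor $\psi$ under the integral. Starting from
$$\langle \mathcal{L}_\psi e_\ell, e_k \rangle_{L^2} = \int_{\mathbb{S}^1} e^{2 i \pi (\ell x - k T(x))} \psi(x) \mathrm{d}x,$$
the same integration-by-parts identity used for $\mathcal{L}$ yields, for every $m \in \N$,
$$\langle \mathcal{L}_\psi e_\ell, e_k \rangle_{L^2} = \int_{\mathbb{S}^1} e^{2 i \pi (\ell x - k T(x))} L_{a_{k,\ell}}^m(\psi)(x) \mathrm{d}x,$$
so the problem reduces, as before, to bounding $\n{L_{a_{k,\ell}}^m(\psi)}_\infty$ and then optimizing over $m$.

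The new ingredient is a Leibniz-type expansion for $L_a^m \psi$. By an easy induction on $m$, there exist non-negative integer coefficients $c_{n_0, n_1, \ldots, n_m}^{(m)}$, independent of $a$ and $\psi$, such that
$$L_a^m \psi = \sum_{n_0 + n_1 + \dots + n_m = m} c_{n_0, n_1, \dots, n_m}^{(m)} \psi^{(n_0)} \prod_{j=1}^m a^{(n_j)}.$$
Plugging $a = a_{k,\ell}$ into this expansion, applying the bound on $\va{a_{k,\ell}^{(n)}}$ from the proof of Lemma \ref{lmfonda}, the Denjoy--Carleman estimate $\va{\psi^{(n_0)}} \leq C_\psi R_\psi^{n_0} n_0! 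M_{n_0}$, and the log-convexity inequality $M_{n_0} M_{n_1} \cdots M_{n_m} \leq M_m$ (which holds whenever $n_0 + n_1 + \cdots + n_m = m$, exactly as in \eqref{eqlogconv}), yields an estimate of the form
$$\n{L_{a_{k,\ell}}^m(\psi)}_\infty \leq C M_m \p{\frac{R'}{\va{k}}}^m \sum_{n_0 + \dots + n_m = m} c_{n_0, \ldots, n_m}^{(m)} n_0! \prod_{j=1}^m n_j!.$$

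To evaluate the combinatorial sum, I would use the same trick as in the original proof: apply the Leibniz expansion to the test pair $\psi(x) = a(x) = \frac{1}{1-x}$ at $x = 0$, where $\psi^{(n_0)}(0) = n_0!$ and $a^{(n_j)}(0) = n_j!$. A direct induction shows $L_a^m\p{\frac{1}{1-x}} = 2^m m! / (1-x)^{2m+1}$ for this choice of $a$, so $L_a^m\p{\frac{1}{1-x}}(0) = 2^m m!$ and the combinatorial sum equals $2^m m!$, growing at most exponentially times $m!$. Substituting back gives $\n{L_{a_{k,\ell}}^m(\psi)}_\infty \leq C' m! M_m \p{R''/\va{k}}^m$, and optimizing over $m$ yields the desired bound $C w(R/\va{k})$ by the very definition of $w$. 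The only genuinely new step is the Leibniz expansion and the associated combinatorial identity; both are routine generalizations of the $\psi = 1$ case and present no real obstacle.
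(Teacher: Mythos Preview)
Your proof is correct and follows essentially the same route as the paper: the same integration-by-parts reduction to $\n{L_{a_{k,\ell}}^m(\psi)}_\infty$, the same Leibniz-type expansion with universal integer coefficients, the same use of log-convexity to collect the $M_{n_j}$ into $M_m$, and the same combinatorial trick of evaluating the coefficient sum by specializing to $a=\psi=\frac{1}{1-x}$, for which $L_a^m(a)(0)=2^m m!$. The only differences are notational (you call the $\psi$-derivative index $n_0$ rather than $k$, which is in fact clearer since $k$ already denotes the Fourier index).
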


\begin{proof}
Recall the differential operator $L_{a_{k,\ell}}$ introduced in the proof of Lemma \ref{lmfonda} and notice that for all $m \in \N$ we have
\begin{equation*}
\begin{split}
\langle \mathcal{L}_\psi e_\ell, e_k \rangle_{L^2} = \int_{\mathbb{S}^1} e^{2 i \pi (\ell x - k T(x))} \psi(x) \mathrm{d}x = \int_{\mathbb{S}^1} e^{2 i \pi (\ell x - k T(x)} L_{a_{k,\ell}}^m(\psi)(x) \mathrm{d}x,
\end{split}
\end{equation*}
and thus we want to bound $\n{L_{a_{k,\ell}}^m(\psi)}_{\infty}$ instead of $\n{L_{a_{k,\ell}}^m(1)}_{\infty}$. As in the proof of Lemma \ref{lmfonda}, we notice that there are natural integer coefficients that do not depend on $a_{k,\ell}$ nor $\psi$ such that 
\begin{equation}\label{eqformule}
\begin{split}
L_{a_{k,\ell}}^m(\psi) = \sum_{n_1 + \dots + n_m + k = m} c_{n_1,\dots,n_m,k} \psi^{(k)} \prod_{j=1}^m a_{k,\ell}^{(n_j)}.
\end{split}
\end{equation}
Then, working as in the proof of Lemma \ref{lmfonda} and using the fact that $\psi$ is of class $\mathcal{C}^M$, we find constants $C,R > 0$ that do not depend on $m, k$ or $\ell$ such that
\begin{equation*}
\begin{split}
\n{L_{a_{k,\ell}}^m(\psi)}_{\infty} \leq C \p{\frac{R}{\va{k}}}^m M_m \sum_{n_1 + \dots + n_m + k = m} c_{n_1,\dots,n_m,k} k! \prod_{j=1}^m n_j!.
\end{split}
\end{equation*}
As in the proof of Lemma \ref{lmfonda}, we introduce now the operator $L_a$ obtained by replacing the function $a_{k,\ell}$ in the definition of $L_{a_{k,\ell}}$ by $a : x \mapsto \frac{1}{1-x}$. Since the coefficients in \eqref{eqformule} do not depend on $a_{k,\ell}$ nor $\psi$ we have
\begin{equation*}
\begin{split}
L_a^m(a)(0) = \sum_{n_1 + \dots + n_m + k = m} c_{n_1,\dots,n_m,k} k! \prod_{j=1}^m n_j!
\end{split}
\end{equation*}
but direct computation shows that $L_a^m(a) : x \mapsto \frac{2^m m!}{(1-x)^{2m+1}}$, and this ends the proof.
\end{proof}

\bibliographystyle{plain}
\bibliography{biblio_op_DC.bib}

\begin{thebibliography}{10}

\bibitem{Bal2}
Viviane Baladi.
\newblock {\em Dynamical {{Zeta Functions}} and {{Dynamical Determinants}} for
  {{Hyperbolic Maps}}}, volume~68 of {\em Ergebnisse}.
\newblock {Springer}, 2018.

\bibitem{Tsu}
Viviane Baladi and Masato Tsujii.
\newblock Dynamical determinants and spectrum for hyperbolic diffemorphisms.
\newblock {\em Geometric and probabilistic structures in dynamics}, Amer. Math.
  Soc., Providence, RI(469):29--68, 2008.

\bibitem{bandtlowResolventEstimatesOperators2008}
Oscar~F. Bandtlow.
\newblock Resolvent {{Estimates}} for {{Operators Belonging}} to {{Exponential
  Classes}}.
\newblock {\em Integral Equations and Operator Theory}, 61(1):21--43, May 2008.

\bibitem{bandtlowExplicitEigenvalueEstimates2008}
Oscar~F. Bandtlow and Oliver Jenkinson.
\newblock Explicit eigenvalue estimates for transfer operators acting on spaces
  of holomorphic functions.
\newblock {\em Advances in Mathematics}, 218(3):902--925, 2008.

\bibitem{bandtlowSpectralStructureTransfer2017}
Oscar~F. Bandtlow, Wolfram Just, and Julia Slipantschuk.
\newblock Spectral structure of transfer operators for expanding circle maps.
\newblock {\em Annales de l'Institut Henri Poincar{\'e}. Analyse Non
  Lin{\'e}aire}, 34(1):31--43, 2017.

\bibitem{bandtlowLowerBoundsRuelle2019}
Oscar~F. Bandtlow and Fr{\'e}d{\'e}ric Naud.
\newblock Lower bounds for the {{Ruelle}} spectrum of analytic expanding circle
  maps.
\newblock {\em Ergodic Theory and Dynamical Systems}, 39(2):289--310, 2019.

\bibitem{Boas}
Ralph Boas.
\newblock {\em Entire Functions}.
\newblock {Academic Press}, 1954.

\bibitem{furdosAlmostAnalyticExtensions2019}
Stefan F\"{u}rd\"{o}s, David~Nicolas Nenning, Armin Rainer, and Gerhard
  Schindl.
\newblock Almost analytic extensions of ultradifferentiable functions with
  applications to microlocal analysis.
\newblock {\em J. Math. Anal. Appl.}, 481(1):123451, 51, 2020.

\bibitem{Gohb}
Israel Gohberg, Seymour Goldberg, and Nahum Krupnik.
\newblock {\em Traces and {{Determinants}} of {{Linear Operators}}}.
\newblock Operator Theory : {{Advances}} and Applications, 116. {Birkha{\"u}ser
  Verlag, Basel-Boston-Berlin}, 2000.

\bibitem{GouLiv2}
S\'ebastien Gou\"ezel and Carlangelo Liverani.
\newblock Compact locally maximal hyperbolic sets for smooth maps: fine
  statistical properties.
\newblock {\em J. Differential Geom.}, 79(3):433--477, 2008.

\bibitem{Groth}
Alexandre Grothendieck.
\newblock {\em Produits Tensoriels Topologiques et Espaces Nucl{\'e}aires}.
\newblock {American Mathematical Society}, 1955.

\bibitem{jezequelGlobalTraceFormula2019}
Malo J{\'e}z{\'e}quel.
\newblock Global trace formula for ultra-differentiable {{Anosov}} flows.
\newblock {\em arXiv:1901.09576 [math]}, January 2019.

\bibitem{lagtf}
Malo J\'{e}z\'{e}quel.
\newblock Local and global trace formulae for smooth hyperbolic
  diffeomorphisms.
\newblock {\em J. Spectr. Theory}, 10(1):185--249, 2020.

\bibitem{Kato}
Tosio Kato.
\newblock {\em Perturbation Theory for Linear Operators}.
\newblock {Springer-Verlag Berlin-Heidelberg-Neywork}, 1966.

\bibitem{nqa}
Andreas Kriegl, Peter~W. Michor, and Armin Rainer.
\newblock The convenient setting for non-quasianalytic {{Denjoy}}-{{Carleman}}
  differentiable mappings.
\newblock {\em J. Funct. Anal.}, 256(11):3510--3544, 2009.

\bibitem{kur}
Shige~Toshi Kuroda.
\newblock On a generalization of the {W}einstein-{A}ronszajn formula and the
  infinite determinant.
\newblock {\em Sci. Papers College Gen. Ed. Univ. Tokyo}, 11:1--12, 1961.

\bibitem{naudRuelleSpectrumGeneric2012}
Fr{\'e}d{\'e}ric Naud.
\newblock The {{Ruelle}} spectrum of generic transfer operators.
\newblock {\em Discrete and Continuous Dynamical Systems. Series A},
  32(7):2521--2531, 2012.

\bibitem{Ruelle1}
David Ruelle.
\newblock Zeta-{{Functions}} for {{Expanding Maps}} and {{Anosov Flows}}.
\newblock {\em Inventiones Math.}, 34:231--242, 1976.

\bibitem{slipantschukCompleteSpectralData2017}
J~Slipantschuk, O~F Bandtlow, and W~Just.
\newblock Complete spectral data for analytic {{Anosov}} maps of the torus.
\newblock {\em Nonlinearity}, 30(7):2667--2686, July 2017.

\bibitem{slipantschukAnalyticExpandingCircle2013}
Julia Slipantschuk, Oscar~F. Bandtlow, and Wolfram Just.
\newblock Analytic expanding circle maps with explicit spectra.
\newblock {\em Nonlinearity}, 26(12):3231--3245, 2013.

\end{thebibliography}

\end{document}